\newtheorem{thm}{Theorem}[section]
\newtheorem{lem}[thm]{Lemma}
\newtheorem{prop}[thm]{Proposition}
\theoremstyle{definition}
\newtheorem{defn}[thm]{Definition}
\theoremstyle{remark}
\newtheorem{rem}[thm]{Remark}
\newtheorem*{ex}{Example}
\numberwithin{equation}{section}
\begin{document}

\title{Construction of a tensor product algebra with a multicentric functional calculus}

\author{Diana Andrei}

\address{%
	Aalto University\\
	Department of Mathematics and Systems Analysis\\
	P.O. Box 11100\\
	Otakaari 1M, Espoo\\
	FI-00076 Aalto, Finland}

\email{diana.andrei@aalto.fi}


\subjclass{Primary 46J10; Secondary 47A13} 

\keywords{Banach algebra, tensor algebras, tensor norms, multicentric calculus, commuting operators, Jordan blocks.}

\date{May 1, 2021}

\begin{abstract}
	In the multicentric calculus one takes a polynomial with simple roots as a new global variable and replaces  scalar functions $\varphi$ by functions $f$ taking values in $\mathbb{C}^d$  with $d$ the degree of the polynomial leading to  an efficient holomorphic functional calculus for bounded operators. This calculus was extended for non-holomorphic functions, so that if $A$ is not diagonalizable one can find a $p$ such that $p(A)$ is diagonalizable and apply the calculus to all matrices. This was done in \cite{oam} by creating a Banach algebra for $\mathbb{C}^d$-valued continuous functions in such a way that the original functions $\varphi$ appear as Gelfand transforms of $f.$. In this paper we consider constructing a Banach algebra for  functions  from $\mathbb{C}^2$ into  $\mathbb{C}^{d_1 \times d_2}$ which then likewise leads to a functional calculus for commuting pairs of matrices.	
\end{abstract}

\maketitle

\section{Introduction}

Let $z\in \mathbb{C}$ and $p(z)$ be a monic polynomial with distinct roots $\Lambda= \lbrace \lambda_i \rbrace,$ for $i=1,\dots ,d.$ In multicentric holomorphic calculus \cite{jfa}, one represents the holomorphic function $\varphi$ using a new polynomial variable $w=p(z),$ so that functions $\varphi (z)$ are represented with the help of vector-valued functions $f,$ mapping a compact $M\subset \mathbb{C}$ into $\mathbb{C}^{d}$ by 
$$
w \mapsto f(w) \in \mathbb{C}^{d}.
$$ 
If $\varphi$ is holomorphic then so if $f.$ This calculus is well defined for matrices that are diagonalizable. If we have non-diagonalizable matrix, $A,$ then we can still make use of the calculus by simply finding a polynomial $p$ such that $p(A)$ is diagonalizable.

Olavi Nevanlinna in \cite{oam} shows how there is a simple way to parametrize continuous functions which slow down at those places where extra smoothness is needed. This allows a functional calculus which agrees with the holomorphic functional calculus if applied to holomorphic functions, but is defined for much wider class of functions.

The holomorphic calculus was extended to $n-$tuples of commuting operators in \cite{aot}. It was then natural to ask what is the largest class of functions in original variable if the vector valued functions are only continuous. For that, one needs a Banach algebra structure which was worked out in \cite{oam} for single variable. At the end the author shows that the original function $\varphi$ is the Gelfand transformation of $f.$

The purpose of this paper is to extend the calculus in \cite{oam} for pairs of commuting operators. Note that we only consider the commuting case, but there can also be value in the non-commuting case and for this one can check \cite{oam2}. The difficulties in moving to a pair of variables is not in lifting commuting operators, but finding a Banach algebra which can then be identified with the space of continuous two variable functions. We work this out by finding a suitable product and operator norm which make the vector space of continuous functions a Banach algebra. Then another difficulty is finding the suitable cross norm so that the Banach algebra will be identified with the tensor product of single variable Banach algebras. That is, being able to identify the Banach space of continuous two variable functions with the tensor product of continuous one variable Banach spaces. Finally, because of this identifications we are able to apply the Gelfand theory and to find the characters of the Banach algebra. It then turns out that this represents, like in \cite{oam}, the original functions as Gelfand transforms.

Let's now shortly recall the $1-$variable case following \cite{oam}. Note that we are using throughout the paper the $w$ variable in the $M$ domain, and the $z$ variable in the $K=p^{-1}(M)$ domain.

Let $\sigma_{ij}$ be given by
$$
\sigma_{ij}= \frac{1}{p(\lambda_j)}\frac{1}{\lambda_i-\lambda_j}.
$$
Now for $w$ fixed complex and $\sigma_{ij}$ as well, $e_i \in \mathbb{C}^{d}$ is the usual coordinate vector, defined by 
$$
e_i \circledcirc e_i = e_i - w \sum_{j\neq i} (\sigma_{ij} e_i + \sigma_{ji} e_j )
$$
and for $j\neq i$ 
$$
e_i \circledcirc e_j = w (\sigma_{ij}e_i + \sigma_{ji}e_j).
$$
The product $\circledcirc$, called 'polyproduct', is commutative and denote $1= \sum_{i=1}^{d}e_i,$ so it is straightforward that we have $1 \circledcirc e_i = e_i. $ Then these coordinate vectors become basis elements in the Banach algebra and an element in the algebra can be written in the form 
$$
f= \sum_{i=1}^{d} f_i(w)e_i.
$$
Considering the dual of $\mathbb{C}^{d},$ the functionals $\eta$ are of the form 
$$
\eta : a= \sum_i \alpha_i e_i \mapsto \sum_i \eta_i \alpha_i
$$
and requiring $\eta (1)=1$ means $\sum_i \eta_i =1.$ The functional $\eta$ is a {\it character} if, in addition, we have
$$
\eta(a \circledcirc b) = \eta (a) \eta (b).
$$

If we assume we are given continuous function $f$ mapping a compact $M \subset \mathbb{C} $ into $\mathbb{C}^{d},$ it determines a unique function $\varphi$ on $K=p^{-1}(M)$ by setting
\begin{equation}
	\label{varphi}
	\varphi(z) = \sum_{j=1}^{d} \delta_{j} (z) f_{j}(p(z)), \quad\text{for } z\in K
\end{equation}
and thus $\varphi$ is given on $K$ by a {\em multicentric representation} denoted in short
$$
\varphi = \mathcal{L} f.
$$

Consider now $w=p(z)$ as a new variable. After defining an operator norm, $\|\cdot \|,$ the vector space $C(M)^{d}$ of continuous functions $f$ from a compact $M\subset \mathbb{C}$ into $\mathbb{C}^{d},$ with the operator norm $\| f\|$ and the product $\circledcirc$ becomes a Banach algebra denoted by $C_\Lambda (M).$ Then the function $\varphi$ in (\ref{varphi}) appears as the Gelfand transform in the new algebra $C_\Lambda (M).$ 

When we move to two variables, there are technical parts on how to define the algebra, but the key is to get the norm in this algebra so that it becomes a Banach algebra and it represents all continuous functions. After that one still needs to find all characters, which then allows one to look at $\varphi$ as the Gelfand transformation of $f.$

\section{Construction of Banach algebra}

\subsection{Outline of {\boldmath $1$} variable case}
Recall for \cite{oam} the following notions and results.
\begin{defn}
	\label{def_polyprod1}
	Let $f$ and $g$ be pointwise defined functions from $M\subset \mathbb{C}$ into $\mathbb{C}^{d}.$ Then their 'polyproduct' $f\circledcirc g$ is a function defined on $M,$ taking values in $\mathbb{C}^{d}$ such that
	$$
	(f\circledcirc g)(w)= (f\circ g)(w)- w(L\circ \Box f(w)\circ \Box g(w))l,
	$$
	where $L$ is a matrix which has zero diagonal and $L_{ij}=1/(\lambda_i-\lambda_j)$ for $i\neq j,$  $l$ is a vector in $ \mathbb{C}^{d}$ which has components $l_j=1/p'(\lambda_j),$ $\circ$ is the Hadamard (or Schur, elementwise) product and for $a\in \mathbb{C}^{d}$ we set
	$$
	\Box : a \mapsto \Box a= 
	\begin{pmatrix}
		0 & a_1-a_2 & \dots & a_1-a_d \\
		a_2-a_1 & 0 & \dots & a_2-a_d \\
		\dots & \dots & \dots & \dots \\
		a_d-a_1 & \dots & a_d-a_{d-1} & 0 
	\end{pmatrix}
	$$
	and call it boxing the vector $a.$ 
\end{defn}

The following short notation will be use for simplicity:
$$
f\circledcirc g=f\circ g - w(L\circ \Box f\circ \Box g)l.
$$
For the powers we write $f^{n}=f\circledcirc f^{n-1}$ and the inverse as $f^{-1}$ where it exists: $f \circledcirc f^{-1}= {\bf 1},$ $({\bf 1}=(1,\dots ,1)^{t}\in \mathbb{C}^{d}, \text{ is the unit in the algebra}).$

\begin{prop}\label{prop2.2}
	The vector space of functions 
	$$
	f:M \subset \mathbb{C} \rightarrow \mathbb{C}^{d}
	$$
	equipped with the product $\circledcirc$ becomes a complex commutative algebra with ${\bf 1}$ as the unit.
\end{prop}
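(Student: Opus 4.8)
The plan is to reduce everything to a single fibre and then verify the axioms there. Because $f\circledcirc g$ is computed by applying, separately at each point $w\in M$, the fixed bilinear map $B_w(a,b)=a\circ b-w\,(L\circ\Box a\circ\Box b)\,l$ on $\mathbb{C}^{d}$ to the values $a=f(w)$, $b=g(w)$, the whole function space is nothing but the product $\prod_{w\in M}(\mathbb{C}^{d},B_w)$ of the fibre algebras. Hence it is a commutative unital algebra precisely when each $(\mathbb{C}^{d},B_w)$ is, the unit being the constant function ${\bf 1}$. Bilinearity, commutativity and the unit law on the fibre are immediate: $\Box$ is linear and $\circ$ is bilinear, so $B_w$ is bilinear; $\circ$ is commutative, so $B_w(a,b)=B_w(b,a)$; and since every entry of $\Box{\bf 1}$ is $({\bf 1})_i-({\bf 1})_j=0$, the correction term drops out, leaving $B_w({\bf 1},a)={\bf 1}\circ a=a$.

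The crux is associativity of $B_w$, which I would establish by diagonalising rather than by expanding the triple products. Writing $\delta_i(z)=p(z)/\bigl(p'(\lambda_i)(z-\lambda_i)\bigr)$ for the Lagrange basis attached to $\Lambda$ and setting $\mathcal{L}a=\sum_i\delta_i a_i$, a partial-fraction computation gives the scalar identities $\delta_i\delta_j=w(\sigma_{ij}\delta_i+\sigma_{ji}\delta_j)$ for $i\neq j$ and, using $\sum_k\delta_k\equiv1$, $\delta_i^2=\delta_i-w\sum_{k\neq i}(\sigma_{ik}\delta_i+\sigma_{ki}\delta_k)$, with $w=p(z)$ and $\sigma_{ij}=L_{ij}l_j$. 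These are exactly $\mathcal{L}(e_i\circledcirc e_j)=(\mathcal{L}e_i)(\mathcal{L}e_j)$, so by bilinearity $\mathcal{L}(a\circledcirc b)=(\mathcal{L}a)(\mathcal{L}b)$ as ordinary products of polynomials in $z$. Fixing a regular value $w$ and evaluating at the $d$ distinct branches $z_1,\dots,z_d$ of $p^{-1}(w)$ produces characters $\eta^{(k)}(a)=\sum_i\delta_i(z_k)a_i$ satisfying $\eta^{(k)}(a\circledcirc b)=\eta^{(k)}(a)\eta^{(k)}(b)$; their joint map $\Phi_w=(\eta^{(1)},\dots,\eta^{(d)})$ has coefficient matrix $[\delta_i(z_k)]$, a nonzero multiple of the Cauchy matrix $[1/(z_k-\lambda_i)]$, hence invertible. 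Thus $\Phi_w$ is an algebra isomorphism from $(\mathbb{C}^{d},B_w)$ onto $(\mathbb{C}^{d},\cdot)$ with the coordinatewise product, and associativity (as well as commutativity again) follows from those of $\cdot$.

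It remains to cover the finitely many critical values of $p$, where the branches collide and $\Phi_w$ degenerates. For this I would invoke the polynomial–identity principle: for fixed $a,b,c\in\mathbb{C}^{d}$ both $(a\circledcirc b)\circledcirc c$ and $a\circledcirc(b\circledcirc c)$ are $\mathbb{C}^{d}$-valued polynomials in $w$ of degree at most two, and they agree on the cofinite set of regular values, hence for every $w$. The main obstacle is therefore the scalar identity $\mathcal{L}(e_i\circledcirc e_j)=(\mathcal{L}e_i)(\mathcal{L}e_j)$; once it is in hand the algebra axioms transfer for free. Should a self-contained route be preferred, the same associativity can be checked directly from the structure constants $e_i\circledcirc e_j$ by distinguishing the cases where $i,j,k$ are all equal, exactly two equal, or pairwise distinct.
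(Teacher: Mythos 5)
Your proof is correct, but it takes a genuinely different (and more self-contained) route than the paper: the paper states Proposition~\ref{prop2.2} only as a result recalled from \cite{oam}, without proof, and the closest it comes to an argument is the proof of Proposition~\ref{prop2.9}, where products of the Lagrange functions are expanded via the identities \eqref{eq2.8}--\eqref{eq2.11} to show that $\mathcal{L}$ is multiplicative. Your fibrewise reduction and your character identities rest on exactly the one-variable versions of those same $\delta$-identities, so that core is shared. What is genuinely different, and what your route buys, is the handling of associativity: deducing the algebra axioms from the homomorphism property $\mathcal{L}(f\circledcirc g)=(\mathcal{L}f)(\mathcal{L}g)$ alone --- the paper's implicit viewpoint --- would require $\mathcal{L}$ to be injective, and this fails when $M$ contains critical values of $p$ (this non-injectivity is precisely why $C_\Lambda(M)$ can fail to be semi-simple, cf.\ Theorem 2.18 of \cite{oam} quoted in the paper). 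Your two extra steps close exactly that hole: at a regular value $w$ the joint evaluation map $\Phi_w=(\eta^{(1)},\dots,\eta^{(d)})$ diagonalises the fibre algebra, and the polynomial-identity principle in $w$ (both triple products are polynomials of degree at most $2$ in $w$ agreeing on the cofinite set of regular values of $p$ in $\mathbb{C}$) transports associativity to every $w$, in particular to critical values lying in $M$. One small repair is needed in the invertibility step: $[\delta_i(z_k)]$ is the Cauchy matrix $[1/(z_k-\lambda_i)]$ multiplied by the diagonal matrix $\mathrm{diag}\bigl(w/p'(\lambda_i)\bigr)$, which is singular at $w=0$, and $0$ is a regular value since $p$ has simple roots; either treat $w=0$ separately (there $[\delta_i(z_k)]$ is a permutation matrix) or, more cleanly, note that $\{\delta_i\}$ is a basis of the polynomials of degree $<d$ and evaluation at the $d$ distinct points $z_1,\dots,z_d$ is injective on that space. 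With that touch-up your argument is complete.
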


\begin{thm}
	\label{thm2.3}
	Let $f$ and $g$ be defined in $M $ and $K=p^{-1}(M).$ Then if $\varphi$ and $\psi$ are functions defined on $K$ by $\varphi=\mathcal{L}f$ and $\psi=\mathcal{L}g,$ then $\varphi\psi$ is given by
	$$
	\varphi\psi=\mathcal{L}(f\circledcirc g).
	$$
\end{thm}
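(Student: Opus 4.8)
The plan is to prove that $\mathcal{L}$ is multiplicative by reducing the identity to the multiplication rules of the basis functions. Recall that, since $p$ is monic with simple roots $\Lambda$, the coefficients in (\ref{varphi}) are the Lagrange interpolation polynomials
$$
\delta_j(z)=\frac{p(z)}{p'(\lambda_j)\,(z-\lambda_j)},
$$
which satisfy $\delta_j(\lambda_i)=\delta_{ij}$ and the partition of unity $\sum_{j}\delta_j\equiv 1$. The entire statement rests on two pointwise identities for products of these functions, one off-diagonal and one diagonal, which I would establish first.

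For $i\neq j$, a partial fraction expansion of $1/\big((z-\lambda_i)(z-\lambda_j)\big)$ combined with $p(z)/(z-\lambda_i)=p'(\lambda_i)\delta_i(z)$ gives
$$
\delta_i(z)\,\delta_j(z)=p(z)\big(\sigma_{ij}\,\delta_i(z)+\sigma_{ji}\,\delta_j(z)\big),
$$
which is exactly the rule $e_i\circledcirc e_j=w(\sigma_{ij}e_i+\sigma_{ji}e_j)$ read with $w=p(z)$ and $e_k\mapsto\delta_k$. For the diagonal term I would avoid the awkward double-pole computation and instead use the partition of unity: multiplying $\delta_i$ by $1=\sum_k\delta_k$ and isolating $\delta_i^2$ yields
$$
\delta_i(z)^2=\delta_i(z)-\sum_{k\neq i}\delta_i(z)\delta_k(z)=\delta_i(z)-p(z)\sum_{k\neq i}\big(\sigma_{ik}\delta_i(z)+\sigma_{ki}\delta_k(z)\big),
$$
which reproduces $e_i\circledcirc e_i$. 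Hence in every case $\mathcal{L}(e_i\circledcirc e_j)=\delta_i\delta_j$.

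With these identities the rest is bookkeeping. Setting $w=p(z)$ and writing $f=\sum_i f_i(w)e_i$, $g=\sum_j g_j(w)e_j$, the bilinearity of $\circledcirc$ at each fixed $w$ gives $f\circledcirc g=\sum_{i,j}f_i(w)g_j(w)\,(e_i\circledcirc e_j)$. Applying $\mathcal{L}$ termwise and substituting the basis identities yields
$$
\mathcal{L}(f\circledcirc g)(z)=\sum_{i,j}f_i(p(z))\,g_j(p(z))\,\delta_i(z)\delta_j(z)=\Big(\sum_i f_i(p(z))\delta_i(z)\Big)\Big(\sum_j g_j(p(z))\delta_j(z)\Big)=\varphi(z)\psi(z).
$$

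I expect the only genuine obstacle to be the diagonal product $\delta_i^2$: a direct partial-fraction treatment meets a second-order pole at $\lambda_i$ and is unpleasant, so the key idea is to sidestep it by deriving it from the off-diagonal identity through $\sum_k\delta_k\equiv 1$. Everything else amounts to linear-algebra bookkeeping that mirrors the defining relations of the polyproduct, so once the two basis identities are in place the theorem follows.
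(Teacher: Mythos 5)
Your proof is correct, and in outline it is the same reduction the paper relies on; the useful difference is that you prove what the paper only quotes. Strictly speaking, the paper offers no proof of Theorem~\ref{thm2.3} at all --- it is recalled from \cite{oam} --- and the closest argument in the paper, the proof of the two-variable analogue Proposition~\ref{prop2.9}, follows your exact strategy: expand $\varphi\psi=\sum_{i,j}f_i g_j\,\delta_i\delta_j$, substitute product identities for the $\delta$'s, and regroup into the polyproduct. But there the two key identities
\begin{align*}
\delta_i\delta_j &= w\,(\sigma_{ij}\delta_i+\sigma_{ji}\delta_j), \qquad i\neq j,\\
\delta_i^{2} &= \delta_i - w\sum_{k\neq i}(\sigma_{ik}\delta_i+\sigma_{ki}\delta_k),
\end{align*}
are imported from \cite{oam} as known ``estimates'' (they reappear as \eqref{eq2.8}--\eqref{eq2.11} in the two-variable notation), whereas you derive them: the off-diagonal one by partial fractions applied to $1/\bigl((z-\lambda_i)(z-\lambda_j)\bigr)$ together with $p(z)/(z-\lambda_i)=p'(\lambda_i)\delta_i(z)$, and the diagonal one from the off-diagonal one via the partition of unity $\sum_k\delta_k\equiv 1$, which neatly sidesteps the double pole that a direct computation of $\delta_i^2$ would encounter. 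Your computation also uses the correct normalization $\sigma_{ij}=1/\bigl(p'(\lambda_j)(\lambda_i-\lambda_j)\bigr)$, consistent with \eqref{sigma1jl}--\eqref{sigma2km}; note that the introduction's formula $\sigma_{ij}=\frac{1}{p(\lambda_j)}\frac{1}{\lambda_i-\lambda_j}$ is a typo, since $p(\lambda_j)=0$. The concluding bilinearity step is justified because, at each fixed $w$, the polyproduct of Definition~\ref{def_polyprod1} is precisely the bilinear extension of the rules for $e_i\circledcirc e_j$ (easily checked by boxing coordinate vectors). So your route matches the paper's template but is self-contained exactly where the paper defers to \cite{oam}.
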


We thus want to define the 'polyproduct' when having two commuting variables and then define the corresponding Banach algebra.
To that end, consider that we have two vector spaces and we take one vector from each, so there are finite sums of them, and then we define a product of these. Since the product will be defined on any two vectors and it is a finite linear combination of them, then we can extend to an algebra on the tensor product of the vector spaces. With a suitable norm this algebra will then become a Banach algebra and the Gelfand theory can be successfully applied into the multicentric setting for two commuting operators. 
This is worked out after recalling a few more facts from the literature.

\subsection{ Tensor product of two spaces }

Let $X$ and $Y$ be two linear vector spaces. The space consisting of linear combinations of elements in these spaces is then the tensor product space $X \otimes Y.$ We write $X \otimes_a Y$ and refer to the algebraic tensor product of $X$ and $Y,$ meaning that there are well defined products in each vector space that induces a well defined product in the tensor product space. 

Recall from \cite{ap} the following useful notions and results.

\begin{defn}\label{projectivenorm}
	Let $X$ and $Y$ be Banach spaces and $X \otimes_{ a} Y$ be the algebraic tensor product of $X$ and $Y$ as linear vector spaces over $\mathbb{C}.$ If for $u \in X \otimes_{ a} Y$ one defines
	$$
	\| u \|_{\gamma} = \inf \lbrace \sum_{i=1}^{n} \| x_i \| \|y_i \|: x_1, \dots ,x_n \in X, y_1, \dots ,y_n \in Y, u= \sum_{i=1}^{n}x_i \otimes y_i \rbrace,
	$$ 
	then $\| \cdot \|_{\gamma}$ is a norm on $X \otimes_{ a} Y.$ The completion of $X \otimes_{ a} Y$ is the projective tensor product of $X$ and $Y$ and is denoted $X \hat{\otimes}_{\gamma} Y.$
	
\end{defn}

\begin{defn}\label{injectivenorm}
	Let $X$ and $Y$ be Banach spaces and $X \otimes_{ a} Y$ be the algebraic tensor product of $X$ and $Y$ as linear vector spaces over $\mathbb{C}.$ Let $X^{*}$ and $Y^{*}$ be the dual of $X$ and $Y,$ respectively. If one defines for $u\in X \otimes_{ a} Y$ the norm given by
	\begin{align}
		\| u \|_{\varepsilon} = \sup \lbrace | \sum_{i=1}^{n} \varphi( x_i) \psi(y_i ) | &: x_1, \dots ,x_n \in X, y_1, \dots ,y_n \in Y, \nonumber\\
		&\text{ } \varphi \in (X^{*})_1, \psi \in (Y^{*})_1,   u= \sum_{i=1}^{n}x_i \otimes y_i \rbrace,\nonumber
	\end{align}
	where $(X^{*})_1$ and $(Y^{*})_1$ denote the unit balls of the dual Banach spaces $X^{*}$ and $Y^{*},$ respectively, then $\| \cdot \|_\varepsilon$ is a norm on $X \otimes_{ a} Y.$ The completion of $X \otimes_{ a} Y$ is the injective tensor product of $X$ and $Y$ and is denoted $X \hat{\otimes}_{\varepsilon}Y.$
\end{defn}

The fact that these two are norms on $X \otimes_{ a} Y$ is easy to check, and it can also be found in \cite{sprR}.

Then one has the following holding true: For $X$ and $Y$ compact Hausdorff spaces 
\begin{equation}
	C(X) \hat{\otimes}_{\varepsilon} C(Y)=C(X\times Y) \label{eq2.1}
\end{equation} 
\begin{equation}
	C(X) \hat{\otimes}_{\gamma} C(Y)=C(X\times Y) \text{ iff } X \text{ or } Y \text{ is finite}. \label{eq2.2}
\end{equation} $($ Here $C(X)$ means the space of continuous functions on $X.)$ 
The statement given by (\ref{eq2.1}) appears in \cite{sprR} as well and for a better understanding one can check it there.

\subsection{Deriving the algebra structure}

For $M_1, M_2 \subset \mathbb{C}$ compact sets, we consider continuous functions $f$ mapping $M_1 \times M_2$ into $\mathbb{C}^{d_1 \times d_2}$ so that we can define an algebra structure. Since scalar multiplication and summation are obvious, we only need to construct the multiplication in a consistent way. 

To that end we rewrite notations and formulas from \cite{oam} so that computations can be extended to two commuting variables.

Let $w_1,w_2$ be fixed complex numbers and let $\sigma_{1,jl}, \sigma_{2,km}$ be given by
\begin{align}
	\sigma_{1,jl} &= \frac{1}{p_1'(\lambda_l)}\frac{1}{\lambda_j-\lambda_l} \label{sigma1jl} \\
	\sigma_{2,km} &= \frac{1}{p_2'(\mu_m)}\frac{1}{\mu_k-\mu_m} \label{sigma2km}.
\end{align}

Let $e_{1,j} \in \mathbb{C}^{d_1}$ and $e_{2,k}\in \mathbb{C}^{d_2}$ be the usual coordinate vectors. While these coordinate vectors are constant vectors in $\mathbb{C}^{d_1}$ and $\mathbb{C}^{d_2},$ respectively, the 'polyproduct' yields a function in $w_i'$s pointwisely as follows. Note that we use the term "basis elements" instead of "coordinate vectors" when we move to tensor algebras, or two variable setup.

\begin{defn}\label{defbasis}
	Define in $\mathbb{C}^{d_1}$ 
	\begin{align}
		e_{1,j} \circledcirc e_{1,j} &= e_{1,j}-w_1 \sum_{1=l\neq j}^{d_1}( \sigma_{1,jl} e_{1,j} + \sigma_{1,lj} e_{1,l} ), \nonumber\\
		e_{1,j} \circledcirc e_{1,l} &= w_1 (\sigma_{1,jl} e_{1,j} + \sigma_{1,lj} e_{1,l}), \quad \text{ for } j \neq l, \nonumber
	\end{align}
	while in $\mathbb{C}^{d_2}$
	\begin{align}
		e_{2,k} \circledcirc e_{2,k} &= e_{2,k}-w_2 \sum_{1=m\neq k}^{d_2}( \sigma_{2,km} e_{2,k} + \sigma_{2,mk} e_{2,m} ), \nonumber\\
		e_{2,k} \circledcirc e_{2,m} &= w_2 (\sigma_{2,km} e_{2,k} + \sigma_{2,mk} e_{2,m}), \quad \text{ for } k\neq m. \nonumber
	\end{align}	
\end{defn}

On the tensor product of $C(M_1,\mathbb{C}^{d_1})$ and $C(M_2, \mathbb{C}^{d_2})$ (i.e continuous functions from $M_i$ into $\mathbb{C}^{d_i}$ for $i=1,2$ ) as vector spaces, there is a unique natural definition of the product that makes it into an algebra. More explicitly, $C(M_1,\mathbb{C}^{d_1}),$ and $C(M_2, \mathbb{C}^{d_2})$ together with the multiplication and operator norm are the algebras $C_{\Lambda_1}(M_1)$ and $C_{\Lambda_2}(M_2),$ and so the tensor product $C_{\Lambda_1} (M_1) \otimes C_{\Lambda_2}(M_2)$ will give the new algebra structure for the two variable case, which is denoted $C_{\Lambda_1 \times \Lambda_2}(M_1 \times M_2).$

\begin{defn}\label{basisformula}
	Let $e_{1,j} \otimes e_{2,k}$ be the basis elements in $C_{\Lambda_1} (M_1) \otimes C_{\Lambda_2}(M_2)$. There is a natural way to define a product in the tensor vector space $C_{\Lambda_1} (M_1) \otimes C_{\Lambda_2}(M_2)$ that makes it into an algebra. This product is induced by the product in the two algebras, $C_{\Lambda_1} (M_1)$ and $C_{\Lambda_2} (M_2)$ so we can define in $\mathbb{C}^{d_1 \times d_2}$ the multiplication of the basis elements as follows 
	\begin{equation}\label{basisproduct}
		(e_{1,j} \otimes e_{2,k} ) \circledcirc ( e_{1,l} \otimes e_{2,m} ) = (e_{1,j} \circledcirc e_{1,l} ) \otimes ( e_{2,k} \circledcirc e_{2,m} ) .
	\end{equation}
\end{defn}

We carefully look at the 'polyproduct' $\circledcirc$ in the vector space $ C_{\Lambda_1} (M_1) \otimes C_{\Lambda_2}(M_2) $ and later define a norm in this tensor space so at the end we can complete the space such that it becomes a Banach algebra, denoted $C_{\Lambda_1 \times \Lambda_2}(M_1 \times M_2).$

So far the above definition looks rather abstract so we need to expand it for a clear read, as follows. A formula for the product in the algebra, $(f \circledcirc g )(w_1,w_2),$ is given after we define a few needed notations. Set
\begin{align*}
	& L_1 \text{  a diagonal matrix with zero diagonal and } L_{1,jl}=\frac{1}{\lambda_j-\lambda_l} \\
	& L_2 \text{  a diagonal matrix with zero diagonal and } L_{2,km}=\frac{1}{\mu_k-\mu_m} \\
	& l_1 \text{  a vector in } \mathbb{C}^{d_1} \text{ with elements } l_{1,l}= \frac{1}{p'_1(\lambda_l)} \\
	& l_2 \text{  a vector in } \mathbb{C}^{d_2} \text{ with elements } l_{2,m}= \frac{1}{p'_2(\mu_m)} .
\end{align*}
By $L_1^{T}, L_2^{T}, l_1^{T}$ and $l_2^{T}$ we mean the transpose of $L_1, L_2, l_1$ and $l_2$ respectively.

Define for $f\in \mathbb{C}^{d_1\times d_2}$
\begin{align*}
	\Box_k : f \mapsto \Box_k  f = 
	\begin{pmatrix}
		0 & f_{1,k}-f_{2,k} & \dots & f_{1,k}-f_{d_1,k}\\
		f_{2,k}-f_{1,k} & 0 & \dots & f_{2,k}-f_{d_1,k} \\
		\dots & \dots & \dots & \dots \\
		f_{d_1,k}-f_{1,k} & \dots & f_{d_1,k}-f_{d_1-1,k} & 0 
	\end{pmatrix}
\end{align*}
the boxing of the $k-$th column of $f(w_1,w_2),$
and
\begin{align*}
	\text{}_j\Box : f \mapsto \text{}_j \Box  f = 
	\begin{pmatrix}
		0 & f_{j,1}-f_{j,2} & \dots & f_{j,1}-f_{j,d_2}\\
		f_{j,2}-f_{j,1} & 0 & \dots & f_{j,2}-f_{j,d_2} \\
		\dots & \dots & \dots & \dots \\
		f_{j,d_2}-f_{j,1} & \dots & f_{j,d_2}-f_{j,d_2-1} & 0 
	\end{pmatrix}
\end{align*}
the boxing of the transpose of the $j-$th row of $f(w_1,w_2).$

\begin{defn}	
	Let $f$ and $g$ be pointwisely defined functions from $M_1 \times M_2 \subset \mathbb{C}^{2}$ into $\mathbb{C}^{d_1 \times d_2}.$ Then the 'polyproduct' $f \circledcirc g$ is a function defined on $M_1 \times M_2$ taking values in $\mathbb{C}^{d_1\times d_2}$ such that elementwise we have
	\small{
		\begin{align}
			\label{polyprod}
			(f \circledcirc g)_{jk}(w_1,w_2) &= (f_{j,k} \circ g_{j,k})(w_1,w_2) \nonumber\\
			&\quad  - w_1 (L_1 \circ \Box_k f(w_1,w_2) \circ \Box_k g(w_1,w_2) )l_1 \nonumber \\
			&\quad  - w_2 l_2^{T}(L_2^{T} \circ \text{ }_j\Box f^{T}(w_1,w_2) \circ \text{ }_j\Box g^{T}(w_1,w_2)) \nonumber \\
			&\quad  + w_1 w_2  (l_2 \otimes l_1)^{T} ((L_2^{T}\otimes \text{ }_j L_1^{T}) \circ \boxtimes_j f^{T}(w_1,w_2) \circ \boxtimes_j g^{T}(w_1,w_2)) \nonumber\\
			\text{}
	\end{align}}
\noindent where by $\text{ }_j L_1^{T}$ we mean the transpose of the $j-$th line in $L_1,$ i.e. a vector in $\mathbb{C}^{d_1}$, and for $f\in \mathbb{C}^{d_1\times d_2}$ we define $ \boxtimes_j f^{T} $ as the matrix in $\mathbb{C}^{d_1^{2} \times d_2}$with $d_2$ columns, denoted ${\rm col}_k,$ for $k=1, \dots ,d_2,$ and each column is a vector in $\mathbb{C}^{d_1^{2}}$:
	
	\begin{equation}
		{\rm col}_1 = 
		\begin{pmatrix}
			0 \\
			0 \\
			\dots \\
			0 \\
			(f_{j,1}-f_{j,2})-(f_{1,1}-f_{1,2}) \\
			(f_{j,1}-f_{j,2})-(f_{2,1}-f_{2,2}) \\
			\dots \\
			(f_{j,1}-f_{j,2})-(f_{d_1,1}-f_{d_1,2}) \\
			(f_{j,1}-f_{j,3})-(f_{1,1}-f_{1,3}) \\
			(f_{j,1}-f_{j,3})-(f_{2,1}-f_{2,3}) \\
			\dots \\
			(f_{j,1}-f_{j,3})-(f_{d_1,1}-f_{d_1,3}) \\
			\dots \dots \dots \dots \\
			(f_{j,1}-f_{j,d_2})-(f_{1,1}-f_{1,d_2}) \\
			(f_{j,1}-f_{j,d_2})-(f_{2,1}-f_{2,d_2}) \\
			\dots \\
			(f_{j,1}-f_{j,d_2})-(f_{d_1,1}-f_{d_1,d_2}) \\
		\end{pmatrix}, 
		{\rm col}_2 = 
		\begin{pmatrix}
			(f_{j,2}-f_{j,1})-(f_{1,2}-f_{1,1}) \\
			(f_{j,2}-f_{j,1})-(f_{2,2}-f_{2,1}) \\
			\dots \\
			(f_{j,2}-f_{j,1})-(f_{d_1,2}-f_{d_1,1}) \\
			0 \\
			0 \\
			\dots \\
			0 \\
			(f_{j,2}-f_{j,3})-(f_{1,2}-f_{1,3}) \\
			(f_{j,2}-f_{j,3})-(f_{2,2}-f_{2,3}) \\
			\dots \\
			(f_{j,2}-f_{j,3})-(f_{d_1,2}-f_{d_1,3}) \\
			\dots \dots \dots \dots \\
			(f_{j,2}-f_{j,d_2})-(f_{1,2}-f_{1,d_2}) \\
			(f_{j,2}-f_{j,d_2})-(f_{2,2}-f_{2,d_2}) \\
			\dots \\
			(f_{j,2}-f_{j,d_2})-(f_{d_1,2}-f_{d_1,d_2}) \\
		\end{pmatrix} 
		\nonumber
	\end{equation}
	and the last column will be
	\begin{equation}
		{\rm col}_{d_2} = 
		\begin{pmatrix}
			(f_{j,d_2}-f_{j,1})-(f_{1,d_2}-f_{1,1}) \\
			(f_{j,d_2}-f_{j,1})-(f_{2,d_2}-f_{2,1}) \\
			\dots \\
			(f_{j,d_2}-f_{j,1})-(f_{d_1,d_2}-f_{d_1,1}) \\
			(f_{j,d_2}-f_{j,2})-(f_{1,d_2}-f_{1,2}) \\
			(f_{j,d_2}-f_{j,2})-(f_{2,d_2}-f_{2,2}) \\
			\dots \\
			(f_{j,d_2}-f_{j,2})-(f_{d_1,d_2}-f_{d_1,2}) \\
			\dots \dots \dots \dots \\
			(f_{j,d_2}-f_{j,d_2-1})-(f_{1,d_2}-f_{1,d_2-1}) \\
			(f_{j,d_2}-f_{j,d_2-1})-(f_{2,d_2}-f_{2,d_2-1}) \\
			\dots \\
			(f_{j,d_2}-f_{j,d_2-1})-(f_{d_1,d_2}-f_{d_1,d_2-1}) \\
			0 \\
			0 \\
			\dots \\
			0 \\
		\end{pmatrix} 
		\nonumber
	\end{equation}
\end{defn}

The formula of the polyproduct can easily be checked using Definition~\ref{basisformula}, as well as following the proof of the next result.

\begin{prop}\label{prop2.9}
	Let $f$ and $g$ be defined on $M_1 \times M_2$ and $K_1 \times K_2 = p_1^{-1}(M_1) \times p_2^{-1}(M_2).$ Then if $\varphi $ and $\psi$ are functions defined on $K_1 \times K_2$ by the multicentric representation
	$$
	\varphi = \mathcal{L} f, \quad \psi = \mathcal{L} g,
	$$
	then we have
	$$
	\varphi \psi = \mathcal{L} (f \circledcirc g).
	$$
\end{prop}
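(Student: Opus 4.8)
The plan is to prove the stated equality pointwise and to lean on the tensor-product structure so that everything reduces to the one-variable result of Theorem~\ref{thm2.3}. Fix $(z_1,z_2)\in K_1\times K_2$ and set $w_1=p_1(z_1)$, $w_2=p_2(z_2)$. At this point $f(w_1,w_2)$ and $g(w_1,w_2)$ are ordinary matrices in $\mathbb{C}^{d_1\times d_2}=\mathbb{C}^{d_1}\otimes\mathbb{C}^{d_2}$, so it suffices to verify $\mathcal{L}(f\circledcirc g)=(\mathcal{L}f)(\mathcal{L}g)$ as a pointwise algebraic identity between matrices, with no appeal to the completion or to continuity. Write $\mathcal{L}_1$ and $\mathcal{L}_2$ for the one-variable multicentric representations of (\ref{varphi}) acting in the $z_1$ and $z_2$ variables respectively; by the natural two-variable analogue of (\ref{varphi}), namely $\varphi(z_1,z_2)=\sum_{j,k}\delta_{1,j}(z_1)\delta_{2,k}(z_2)f_{j,k}(w_1,w_2)$, the representation factorizes on a simple tensor $a\otimes b$ with $a\in\mathbb{C}^{d_1}$, $b\in\mathbb{C}^{d_2}$ as
\[
\mathcal{L}(a\otimes b)(z_1,z_2)=(\mathcal{L}_1 a)(z_1)\,(\mathcal{L}_2 b)(z_2).
\]

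The key step is to check multiplicativity on basis products and then extend by bilinearity. For simple tensors $f=a_1\otimes b_1$ and $g=a_2\otimes b_2$, Definition~\ref{basisformula} gives $(a_1\otimes b_1)\circledcirc(a_2\otimes b_2)=(a_1\circledcirc a_2)\otimes(b_1\circledcirc b_2)$, so applying $\mathcal{L}$ and using the factorization above yields
\[
\mathcal{L}(f\circledcirc g)(z_1,z_2)=\mathcal{L}_1(a_1\circledcirc a_2)(z_1)\,\mathcal{L}_2(b_1\circledcirc b_2)(z_2).
\]
Theorem~\ref{thm2.3}, applied once in each variable, turns the two factors into $(\mathcal{L}_1 a_1)(\mathcal{L}_1 a_2)$ and $(\mathcal{L}_2 b_1)(\mathcal{L}_2 b_2)$. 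Regrouping the four scalar factors and invoking the factorization of $\mathcal{L}$ a second time identifies the result with $\mathcal{L}(f)(z_1,z_2)\,\mathcal{L}(g)(z_1,z_2)=\varphi(z_1,z_2)\psi(z_1,z_2)$, which is the claim on simple tensors.

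To pass to arbitrary $f$ and $g$, I would observe that at the fixed point both sides are bilinear in the pair of matrices $(f(w_1,w_2),g(w_1,w_2))$: the polyproduct (\ref{polyprod}) is bilinear because each of its four terms is built from the linear boxing maps $\Box_k$, ${}_j\Box$, $\boxtimes_j$ together with the bilinear Hadamard product and the scalar entrywise product in its first term, while $\mathcal{L}$ is linear and the pointwise multiplication $\varphi\psi$ is bilinear. Since every matrix of $\mathbb{C}^{d_1\times d_2}$ is a finite sum $\sum_{j,k} f_{j,k}\,(e_{1,j}\otimes e_{2,k})$ of simple tensors, two bilinear maps agreeing on all basis pairs agree everywhere, so the identity established on basis products propagates to all of $\mathbb{C}^{d_1\times d_2}$ and hence holds at the arbitrary point $(z_1,z_2)$.

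I expect the main obstacle to be making the factorization $\mathcal{L}(a\otimes b)=(\mathcal{L}_1 a)(\mathcal{L}_2 b)$ fully precise and reconciling the clean tensor argument with the explicit four-term expression (\ref{polyprod}). The alternative, purely computational route---expanding $\varphi\psi$ directly and matching it term by term against (\ref{polyprod})---is heaviest in the cross term carrying the factor $w_1w_2$ and the object $\boxtimes_j$, where one must show that the $d_1^{2}\times d_2$ construction is exactly the tensor product of the two one-variable correction terms. The conceptual route above sidesteps this difficulty by only ever invoking the one-variable Theorem~\ref{thm2.3}, so that (\ref{polyprod}) is recovered as a consequence rather than verified by hand.
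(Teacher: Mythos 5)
Your strategy is genuinely different from the paper's: the paper proves the proposition by direct computation, expanding the quadruple sum (\ref{eq2.7}) for $\varphi\psi$ with the one-variable identities (\ref{eq2.8})--(\ref{eq2.11}) in the four cases $l=j$ or $l\neq j$, $m=k$ or $m\neq k$, and collecting the result into (\ref{eq2.16}), whose bracket is exactly the entrywise formula (\ref{polyprod}). Your reduction to Theorem~\ref{thm2.3} via the factorization $\mathcal{L}(a\otimes b)=(\mathcal{L}_1a)(\mathcal{L}_2b)$ (which is correct and easy) and the tensor identity of Definition~\ref{basisformula} is sound as far as it goes. But there is a genuine gap, and it sits precisely where you declare the difficulty ``sidestepped.'' Your second paragraph proves multiplicativity of $\mathcal{L}$ for the product obtained by extending Definition~\ref{basisformula} bilinearly from basis pairs; the proposition, however, is about the polyproduct defined by the explicit formula (\ref{polyprod}), and it is (\ref{polyprod}) whose bilinearity you invoke in the third paragraph. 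The propagation argument --- two bilinear maps agreeing on basis pairs agree everywhere --- is valid only if the basis-pair identity has been established for the \emph{same} product that appears in the bilinear map. So you still must check that (\ref{polyprod}), evaluated on a pair of basis tensors $e_{1,j}\otimes e_{2,k}$ and $e_{1,l}\otimes e_{2,m}$, returns $(e_{1,j}\circledcirc e_{1,l})\otimes(e_{2,k}\circledcirc e_{2,m})$; this is nowhere done in your proposal.

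Nor can that verification be avoided in the way your last paragraph claims. Knowing that $\mathcal{L}$ sends both the abstract tensor product and the (\ref{polyprod})-product of $f$ and $g$ to $(\mathcal{L}f)(\mathcal{L}g)$ would let you conclude that the two products coincide --- i.e.\ that (\ref{polyprod}) is ``recovered as a consequence'' --- only if $\mathcal{L}$ were injective. It is not: $\mathcal{L}$ is exactly the Gelfand transform of the algebra (Definition~\ref{gelfandt2}), and by the criterion of \cite{oam} quoted in the paper (Theorem 2.18 there), it has nontrivial kernel whenever some $M_i$ contains an isolated critical value of $p_i$; for instance with $p_1(z)=z^2-1$ and $M_1=\lbrace -1\rbrace$, one has $K_1=\lbrace 0\rbrace$ and $\mathcal{L}_1$ annihilates the nonzero constant function $(1,-1)^{t}$. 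Hence the consistency of (\ref{polyprod}) with Definition~\ref{basisformula} is an honest finite computation (four cases on indicator matrices, then bilinearity), and it is in fact the computational core that the paper's proof carries out in the equivalent form of the $\delta$-expansion. Once you insert that check, your proof is complete, and it is an attractive modularization: the one-variable identities are then quoted through Theorem~\ref{thm2.3} rather than re-derived, and the heavy $w_1w_2$ cross term with $\boxtimes_j$ only needs to be matched on basis pairs rather than for general $f$ and $g$.
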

In other words, $\mathcal{L}$ takes the vector functions into scalar functions in such a way that it becomes an algebra homomorphism 
$$
\mathcal{L}(f \circledcirc g) = (\mathcal{L}f) (\mathcal{L}g).
$$

\begin{proof}
	Let $\varphi(z_1,z_2)$ and $\psi(z_1,z_2)$ be given in the multicentric representation as follows,
	\begin{align}
		\varphi(z_1,z_2) &= \sum_{k=1}^{d_2} \sum_{j=1}^{d_1} \delta_{2,k}(z_2) \delta_{1,j}(z_1) f_{j,k}(w_1,w_2) \label{eq1.2} \\
		\psi(z_1,z_2) &= \sum_{m=1}^{d_2} \sum_{l=1}^{d_1} \delta_{2,m}(z_2) \delta_{1,l}(z_1) g_{l,m}(w_1,w_2) \nonumber.
	\end{align}
	
	Then we can write $\varphi(z_1,z_2) \psi (z_1,z_2),$ in short,
	\begin{align}
		\varphi \psi &= \sum_{j,k,l,m} \delta_{k,2}\delta_{j,1} f_{j,k} \delta_{m,2} \delta_{l,1} g_{l,m} \nonumber \\
		&= \sum_{j,l,k,m} \delta_{j,1} \delta_{l,1} \delta_{k,2} \delta_{m,2} f_{j,k} g_{l,m} ,\label{eq2.7}
	\end{align}
	for $j,l=1,2,\dots ,d_1$ and $k,m=1,2,\dots ,d_2.$
	
	Then we have the following estimates (see \cite{oam})
	\begin{align}
		\delta_{1,j}^{2}(z_1) &= \delta_{1,j}(z_1)-w_1 \sum_{l\neq j} (\sigma_{1,jl}\delta_{1,j}(z_1)+ \sigma_{1,lj}\delta_{1,l}(z_1)) \label{eq2.8} \\
		\delta_{2,k}^{2} (z_2)&= \delta_{2,k}(z_2)-w_2 \sum_{m\neq k} (\sigma_{2,km}\delta_{1,k}(z_2)+ \sigma_{2,mk}\delta_{2,m}(z_2)),\label{eq2.9} 
	\end{align}
	while for $j \neq l$ and $k \neq m,$ respectively, we have
	\begin{align}
		\delta_{1,j}(z_1)\delta_{1,l}(z_1) &= w_1 [ \sigma_{1,jl}\delta_{1,j}(z_1)+ \sigma_{1,lj}\delta_{1,l}(z_1) ] \label{eq2.10} \\
		\delta_{2,k}(z_2)\delta_{2,m}(z_2) &= w_2 [ \sigma_{2,km}\delta_{1,k}(z_2)+ \sigma_{2,mk}\delta_{2,m}(z_2) ]\label{eq2.11} .
	\end{align}
	
	Now we use the estimates (\ref{eq2.8}) and (\ref{eq2.9}) and get for $l=j$ and $m=k,$
	\begin{align}
		\delta_{1,j}^{2} \delta_{2,k}^{2} &=(\delta_{1,j} - w_1 \sum_{l\neq j} (\sigma_{1,jl}\delta_{1,j}+ \sigma_{1,lj}\delta_{1,l}) ) \nonumber\\
		& \hspace{1cm} ( \delta_{2,k}-w_2 \sum_{m\neq k} (\sigma_{2,km}\delta_{1,k}+ \sigma_{2,mk}\delta_{2,m}) )\nonumber\\
		&= \delta_{1,j} \delta_{2,k} - w_1 \delta_{2,k} \sum_{l\neq j} (\sigma_{1,jl}\delta_{1,j}+ \sigma_{1,lj}\delta_{1,l}) \nonumber\\
		& \hspace{1.5cm} -w_2 \delta_{1,j} \sum_{m\neq k} (\sigma_{2,km}\delta_{1,k}+ \sigma_{2,mk}\delta_{2,m}) \nonumber\\
		& \hspace{1.5cm} + w_1w_2 \sum_{\scriptscriptstyle l \neq j,m\neq k}  ( \sigma_{1,jl} \sigma_{2,km} \delta_{1,j}\delta_{2,k}+ \sigma_{1,jl} \sigma_{2,mk} \delta_{1,j}\delta_{2,m} \nonumber\\
		& \hspace{3.5cm}+ \sigma_{1,lj} \sigma_{2,km} \delta_{1,l}\delta_{2,k} + \sigma_{1,lj} \sigma_{2,mk} \delta_{1,l}\delta_{2,m}) \label{eq2.12}
	\end{align}
	while for $j\neq l$ and $k=m$ we use (\ref{eq2.9}) and (\ref{eq2.10}) to get
	\begin{align}
		\delta_{1,j}\delta_{1,l}\delta_{2,k}^{2} &= ( w_1 (\sigma_{1,jl}\delta_{1,j}+ \sigma_{1,lj}\delta_{1,l}) )  ( \delta_{2,k}-w_2 \sum_{m\neq k} (\sigma_{2,km}\delta_{1,k}+ \sigma_{2,mk}\delta_{2,m}) )\nonumber\\
		&= w_1 \delta_{2,k} \ (\sigma_{1,jl}\delta_{1,j}+ \sigma_{1,lj}\delta_{1,l}) \nonumber\\
		& \hspace{0.5cm} - w_1w_2 \sum_{m\neq k}  ( \sigma_{1,jl} \sigma_{2,km} \delta_{1,j}\delta_{2,k}+ \sigma_{1,jl} \sigma_{2,mk} \delta_{1,j}\delta_{2,m} \nonumber\\
		& \hspace{2.2cm}+ \sigma_{1,lj} \sigma_{2,km} \delta_{1,l}\delta_{2,k} + \sigma_{1,lj} \sigma_{2,mk} \delta_{1,l}\delta_{2,m}), \label{eq2.13}
	\end{align}
	for $j=l$ and $k \neq m$ we use (\ref{eq2.8}) and (\ref{eq2.11}) to get
	\begin{align}
		\delta_{1,j}^{2}\delta_{2,k}\delta_{2,m} &= (\delta_{1,j} - w_1 \sum_{l\neq j} (\sigma_{1,jl}\delta_{1,j}+ \sigma_{1,lj}\delta_{1,l}) )( w_2  (\sigma_{2,km}\delta_{1,k}+ \sigma_{2,mk}\delta_{2,m}) )\nonumber\\
		&= w_2 \delta_{1,j}  (\sigma_{2,km}\delta_{1,k}+ \sigma_{2,mk}\delta_{2,m}) \nonumber\\
		& \hspace{0.5cm} - w_1w_2 \sum_{l \neq j}  ( \sigma_{1,jl} \sigma_{2,km} \delta_{1,j}\delta_{2,k}+ \sigma_{1,jl} \sigma_{2,mk} \delta_{1,j}\delta_{2,m} \nonumber\\
		& \hspace{2.2cm}+ \sigma_{1,lj} \sigma_{2,km} \delta_{1,l}\delta_{2,k} + \sigma_{1,lj} \sigma_{2,mk} \delta_{1,l}\delta_{2,m}), \label{eq2.14}
	\end{align}
	and finally for $j\neq l$ and $k\neq m$ we use (\ref{eq2.10}) and (\ref{eq2.11}) and we see that
	\begin{align}
		\delta_{1,j}\delta_{1,l}\delta_{2,k}\delta_{2,m} &= ( w_1  (\sigma_{1,jl}\delta_{1,j}+ \sigma_{1,lj}\delta_{1,l}) ) ( w_2  (\sigma_{2,km}\delta_{1,k}+ \sigma_{2,mk}\delta_{2,m}) )\nonumber\\
		&= w_1w_2  ( \sigma_{1,jl} \sigma_{2,km} \delta_{1,j}\delta_{2,k}+ \sigma_{1,jl} \sigma_{2,mk} \delta_{1,j}\delta_{2,m} \nonumber\\
		& \hspace{1.1cm}+ \sigma_{1,lj} \sigma_{2,km} \delta_{1,l}\delta_{2,k} + \sigma_{1,lj} \sigma_{2,mk} \delta_{1,l}\delta_{2,m}) .\label{eq2.15}
	\end{align}
	
	\noindent Hence (\ref{eq2.7}) becomes
	\begin{align}
		\varphi \psi &= \sum_{\scriptscriptstyle j,k} \delta_{1,j}^{2} \delta_{2,k}^{2} f_{j,k} g_{j,k}  +  \sum_{\scriptscriptstyle j,k ,l\neq j} \delta_{1,j}\delta_{1,l}\delta_{2,k}^{2} f_{j,k}g_{l,k} \nonumber\\
		&\hspace{1cm}+ \sum_{ \scriptscriptstyle j,k ,m\neq k }\delta_{1,j}^{2}\delta_{2,k}\delta_{2,m} f_{j,k}g_{j,m} + \sum_{ \scriptscriptstyle j,k ,l\neq j, m\neq k } \delta_{1,j}\delta_{1,l}\delta_{2,k}\delta_{2,m} f_{j,k}g_{l,m} \nonumber\\
		&= \sum_{\scriptscriptstyle j,k} \delta_{1,j} \delta_{2,k} \left[ f_{j,k}g_{j,k} -w_1  \sum_{\scriptscriptstyle l\neq j} \sigma_{1,jl} (f_{j,k}-f_{l,k})(g_{j,k}-g_{l,k})\right. \nonumber\\
		&\hspace{0.5cm} -w_2  \sum_{\scriptscriptstyle m\neq k} \sigma_{2,km} (f_{j,k}-f_{j,m})(g_{j,k}-g_{j,m}) \nonumber\\
		&\hspace{0.5cm} +w_1 w_2 \sum_{\scriptscriptstyle l\neq j, m\neq k} \sigma_{1,jl} \sigma_{2,km} ((f_{j,k}-f_{j,m})-(f_{l,k}-f_{l,m})) \nonumber\\
		&\hspace{4.5cm} \left. ((g_{j,k}-g_{j,m})-(g_{l,k}-g_{l,m})) \right]
		\label{eq2.16}
	\end{align}
\end{proof}

\noindent We continue with an example to see how formulas \eqref{eq2.16} / \eqref{polyprod} look like.

\begin{ex}
	\label{example2.10} Let $p_1(z_1)$ be a monic polynomial of degree $2$ with distinct roots $\lambda_1$ and $\lambda_2,$ and $p_2(z_2)$ be a monic polynomial of degree $3$ with distinct roots $\mu_1,$ $\mu_2$ and $\mu_3$. Denote $w_1=p_1(z_1)$ and $w_2=p_2(z_2).$ Then $(f \circledcirc g)(w_1,w_2)$ is a $2\times 3$ matrix with the following elements	
	\begin{align}
		(f \circledcirc g)_{11} &= f_{1,1}g_{1,1}-w_1 \sigma_{1,2}^{(1)}(f_{1,1}-f_{2,1})(g_{1,1}-g_{2,1}) \nonumber\\
		&- w_2 [ \sigma_{1,2}^{(2)}(f_{1,1}-f_{1,2})(g_{1,1}-g_{1,2})+ \sigma_{1,3}^{(2)}(f_{1,1}-f_{1,3})(g_{1,1}-g_{1,3}) ] \nonumber\\
		&+ w_1 w_2 [ \sigma_{1,2}^{(2)} \sigma_{1,2}^{(1)} ((f_{1,1}-f_{1,2})-(f_{2,1}-f_{2,2}))((g_{1,1}-g_{1,2})-(g_{2,1}-g_{2,2}))\nonumber\\
		&\text{ } + \sigma_{1,3}^{(2)} \sigma_{1,2}^{(1)} ((f_{1,1}-f_{1,3})-(f_{2,1}-f_{2,3}))((g_{1,1}-g_{1,3})-(g_{2,1}-g_{2,3}))] \nonumber\\
		(f \circledcirc g)_{12} &= f_{1,2}g_{1,2}-w_1 \sigma_{1,2}^{(1)}(f_{1,2}-f_{2,2})(g_{1,2}-g_{2,2}) \nonumber\\
		&- w_2 [ \sigma_{2,1}^{(2)}(f_{1,2}-f_{1,1})(g_{1,2}-g_{1,1})+ \sigma_{2,3}^{(2)}(f_{1,2}-f_{1,3})(g_{1,2}-g_{1,3}) ] \nonumber\\
		&+ w_1 w_2 [ \sigma_{2,1}^{(2)} \sigma_{1,2}^{(1)} ((f_{1,2}-f_{1,1})-(f_{2,2}-f_{2,1}))((g_{1,2}-g_{1,1})-(g_{2,2}-g_{2,1}))\nonumber\\
		&\text{ } + \sigma_{2,3}^{(2)} \sigma_{1,2}^{(1)} ((f_{1,2}-f_{1,3})-(f_{2,2}-f_{2,3}))((g_{1,2}-g_{1,3})-(g_{2,2}-g_{2,3}))] \nonumber 
	\end{align}
	\begin{align}
		(f \circledcirc g)_{13} &= f_{1,3}g_{1,3}-w_1 \sigma_{1,2}^{(1)}(f_{1,3}-f_{2,3})(g_{1,3}-g_{2,3}) \nonumber\\
		&- w_2 [ \sigma_{3,1}^{(2)}(f_{1,3}-f_{1,1})(g_{1,3}-g_{1,1})+ \sigma_{3,2}^{(2)}(f_{1,3}-f_{1,2})(g_{1,3}-g_{1,2}) ] \nonumber\\
		&+ w_1 w_2 [ \sigma_{3,1}^{(2)} \sigma_{1,2}^{(1)} ((f_{1,3}-f_{1,1})-(f_{2,3}-f_{2,1}))((g_{1,3}-g_{1,1})-(g_{2,3}-g_{2,1}))\nonumber\\
		&\text{ } + \sigma_{3,2}^{(2)} \sigma_{1,2}^{(1)} ((f_{1,3}-f_{1,2})-(f_{2,3}-f_{2,2}))((g_{1,3}-g_{1,2})-(g_{2,3}-g_{2,2}))] \nonumber \\
		(f \circledcirc g)_{21} &= f_{2,1}g_{2,1}-w_1 \sigma_{2,1}^{(1)}(f_{2,1}-f_{1,1})(g_{2,1}-g_{1,1}) \nonumber\\
		&- w_2 [ \sigma_{1,2}^{(2)}(f_{2,1}-f_{2,2})(g_{2,1}-g_{2,2})+ \sigma_{1,3}^{(2)}(f_{2,1}-f_{2,3})(g_{2,1}-g_{2,3}) ] \nonumber\\
		&+ w_1 w_2 [ \sigma_{1,2}^{(2)} \sigma_{2,1}^{(1)} ((f_{2,1}-f_{2,2})-(f_{1,1}-f_{1,2}))((g_{2,1}-g_{2,2})-(g_{1,1}-g_{1,2}))\nonumber\\
		&\text{ } + \sigma_{1,3}^{(2)} \sigma_{2,1}^{(1)} ((f_{2,1}-f_{2,3})-(f_{1,1}-f_{1,3}))((g_{2,1}-g_{2,3})-(g_{1,1}-g_{1,3}))] \nonumber \\
		(f \circledcirc g)_{22} &= f_{2,2}g_{2,2}-w_1 \sigma_{2,1}^{(1)}(f_{2,2}-f_{1,2})(g_{2,2}-g_{1,2}) \nonumber\\
		&- w_2 [ \sigma_{2,1}^{(2)}(f_{2,2}-f_{2,1})(g_{2,2}-g_{2,1})+ \sigma_{2,3}^{(2)}(f_{2,2}-f_{2,3})(g_{2,2}-g_{2,3}) ] \nonumber\\
		&+ w_1 w_2 [ \sigma_{2,1}^{(2)} \sigma_{2,1}^{(1)} ((f_{2,2}-f_{2,1})-(f_{1,2}-f_{1,1}))((g_{2,2}-g_{2,1})-(g_{1,2}-g_{1,1}))\nonumber\\
		&\text{ } + \sigma_{2,3}^{(2)} \sigma_{2,1}^{(1)} ((f_{2,2}-f_{2,3})-(f_{1,2}-f_{1,3}))((g_{2,2}-g_{2,3})-(g_{1,2}-g_{1,3}))] \nonumber \\
		(f \circledcirc g)_{23} &= f_{2,3}g_{2,3}-w_1 \sigma_{2,1}^{(1)}(f_{2,3}-f_{1,3})(g_{2,3}-g_{1,3}) \nonumber\\
		&- w_2 [ \sigma_{3,1}^{(2)}(f_{2,3}-f_{2,1})(g_{2,3}-g_{2,1})+ \sigma_{3,2}^{(2)}(f_{2,3}-f_{2,2})(g_{2,3}-g_{2,2}) ] \nonumber\\
		&+ w_1 w_2 [ \sigma_{3,1}^{(2)} \sigma_{2,1}^{(1)} ((f_{2,3}-f_{2,1})-(f_{1,3}-f_{1,1}))((g_{2,3}-g_{2,1})-(g_{1,3}-g_{1,1}))\nonumber\\
		&\text{ } + \sigma_{3,2}^{(2)} \sigma_{2,1}^{(1)} ((f_{2,3}-f_{2,2})-(f_{1,3}-f_{1,2}))((g_{2,3}-g_{2,2})-(g_{1,3}-g_{1,2}))]. \nonumber 
	\end{align}
\end{ex}

\subsection{Defining the norm}

Let $M_i\subset \mathbb{C}$ be compacts for $i=1,2,$ so $M_1 \times M_2$ is a compact set. We consider continuous functions $f$ from $M_1 \times M_2\subset \mathbb{C}^{2}$ into $\mathbb{C}^{d_1\times d_2}.$ Let for short denote $X = C (M_1 \times M_2, \mathbb{C}^{d_1 \times d_2})$ and $f\in X$ having the norm
$$
|f|_\infty = \underset{j,k}{\rm max} \underset{M_1 \times M_2}{\rm max} |f_{jk}(w_1,w_2)|,
$$
for $j=1,2,\dots ,d_1,$ $k=1,2,\dots ,d_2.$
Thus $X$ is a Banach space.

\begin{prop}
	With the above setup we have
	$$ p_{j,k}= \sum_{\scriptscriptstyle \alpha_1, \alpha_2=0}^{N_1} \sum_{\scriptscriptstyle \beta_1,\beta_2=0}^{N_2} c_{\scriptscriptstyle \alpha_1 \beta_1 \alpha_2 \beta_2} w_1^{\alpha_1} \overline{w_1}^{\beta_1}  w_2^{\alpha_2} \overline{w_2}^{\beta_2} $$
	are dense in $X.$ 
\end{prop}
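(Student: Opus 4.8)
The plan is to recognize this as an instance of the complex Stone--Weierstrass theorem, after first reducing from the matrix-valued setting to the scalar one.

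First I would reduce to the scalar case. Since the norm on $X = C(M_1\times M_2,\mathbb{C}^{d_1\times d_2})$ is $|f|_\infty = \max_{j,k}\max_{M_1\times M_2}|f_{jk}|$, approximating $f$ in $X$ is equivalent to approximating each of its finitely many scalar entries $f_{jk}\in C(M_1\times M_2,\mathbb{C})$ simultaneously. Thus, given $\varepsilon>0$, if for each pair $(j,k)$ one can find a polynomial $p_{j,k}$ of the stated form with $\max_{M_1\times M_2}|f_{jk}-p_{j,k}|<\varepsilon$, then assembling these into the matrix-valued function $P$ with entries $p_{j,k}$ gives $|f-P|_\infty<\varepsilon$. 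So everything reduces to the density of the polynomials in $w_1,\overline{w_1},w_2,\overline{w_2}$ in the scalar algebra $C(M_1\times M_2,\mathbb{C})$.

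Next I would verify the four hypotheses of the complex Stone--Weierstrass theorem for the set $\mathcal{A}$ of all finite linear combinations $\sum c_{\alpha_1\beta_1\alpha_2\beta_2}\, w_1^{\alpha_1}\overline{w_1}^{\beta_1}w_2^{\alpha_2}\overline{w_2}^{\beta_2}$. (a) $\mathcal{A}$ is a subalgebra: it is a vector space by construction, and the product of two such monomials is again a monomial of the same type, so products stay in $\mathcal{A}$ once one allows the degree bounds to grow, which is harmless for a density statement. (b) $\mathcal{A}$ contains the constants (take $\alpha_1=\beta_1=\alpha_2=\beta_2=0$). (c) $\mathcal{A}$ separates the points of $M_1\times M_2$: if $(w_1,w_2)\neq(w_1',w_2')$ then $w_1\neq w_1'$ or $w_2\neq w_2'$, and the corresponding coordinate monomial takes different values there. (d) $\mathcal{A}$ is self-adjoint, that is, closed under complex conjugation, because conjugating $w_1^{\alpha_1}\overline{w_1}^{\beta_1}w_2^{\alpha_2}\overline{w_2}^{\beta_2}$ merely interchanges the roles of $\alpha_i$ and $\beta_i$, giving another monomial of the same form. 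With these four properties, the complex Stone--Weierstrass theorem yields $\overline{\mathcal{A}}=C(M_1\times M_2,\mathbb{C})$, and combining with the first step completes the proof.

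I do not expect a genuine obstacle here; the one point that must not be overlooked is property (d), self-adjointness, which is precisely why the conjugate variables $\overline{w_1}$ and $\overline{w_2}$ are built into the monomials. Without them $\mathcal{A}$ would consist of functions holomorphic in $(w_1,w_2)$ and would fail to be dense in $C(M_1\times M_2,\mathbb{C})$. A minor bookkeeping remark is that, to keep the family visibly closed under conjugation, one takes the degree bounds symmetric, e.g.\ $N_1=N_2=N$, and lets $N\to\infty$; this affects neither the algebra structure nor the density conclusion.
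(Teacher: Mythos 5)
Your proof is correct, and it takes a mildly but genuinely different route from the paper's. You invoke the complex Stone--Weierstrass theorem once, directly on the compact product $M_1\times M_2$, for the algebra generated by $w_1,\overline{w_1},w_2,\overline{w_2}$, checking the four hypotheses (subalgebra, constants, point separation, self-adjointness) explicitly. The paper instead applies Stone--Weierstrass separately on each factor, obtaining density of the polynomials $p_{1,j}(w_1,\overline{w_1})$ in $C(M_1)$ and of $p_{2,k}(w_2,\overline{w_2})$ in $C(M_2)$, and then passes to two variables by approximating each entry $f_{j,k}\in C(M_1\times M_2)$ by finite sums of products of one-variable functions; that intermediate step is asserted rather than argued, and it really rests on the density of $C(M_1)\otimes C(M_2)$ in $C(M_1\times M_2)$, i.e.\ on the identity $C(M_1)\hat{\otimes}_\varepsilon C(M_2)=C(M_1\times M_2)$ that the paper records separately. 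Both proofs share the same component-wise reduction via the max norm over the $d_1\times d_2$ entries. What your route buys is self-containedness: every hypothesis is verified by hand and no tensor-product density fact is needed, which also makes the role of the conjugate variables transparent (your point (d) is exactly right, and is the reason the monomials include $\overline{w_1},\overline{w_2}$). What the paper's route buys is thematic coherence: factoring the approximation through one-variable algebras anticipates the identification $C(M_1,\mathbb{C}^{d_1})\hat{\otimes}_\varepsilon C(M_2,\mathbb{C}^{d_2})=C(M_1\times M_2,\mathbb{C}^{d_1\times d_2})$ on which the rest of the paper depends. Your closing remark about letting the degree bounds grow symmetrically is a harmless bookkeeping point and does not affect the argument.
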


\begin{proof}
	Consider continuous functions on the compacts $M_1$ and $M_2$ obtained from approximating with polynomials in two variable $(w_1,\overline{w_1})$ and $(w_2,\overline{w_2}),$ i.e.
	
	\begin{align}
		p_{1,j}(w_1,\overline{w_1}) &= \sum_{\alpha_1 = 0}^{N_1} \sum_{\beta_1 =0}^{N_2} a_{\alpha_1 \beta_1} w_1^{\alpha_1} \overline{w_1}^{\beta_1} \nonumber \\
		p_{2,k}(w_2,\overline{w_2}) &= \sum_{\alpha_2 = 0}^{N_1} \sum_{\beta_2 =0}^{N_2} b_{\alpha_2 \beta_2} w_2^{\alpha_2} \overline{w_2}^{\beta_2}. \nonumber
	\end{align}
	
	Now, these polynomials stay dense in $C(M_1)$ and $C(M_2),$ respectively, due to Stone-Weierstrass theorem. 
	
	Let $f_j(w_1) \in M_1 $ and $g_k(w_2)\in M_2,$ so $f_j(w_1)g_k(w_2)\in M_1 \times M_2.$ Then Stone-Weierstrass allows to replace functions with polynomials in two variables $w_i, \overline{w_i},$ for $i=1,2.$  
	Since we have continuous functions on compact sets $M_1, M_2$ where there are finite numbers of components, and since the norm is the maximum over components, it is enough to look at continuous functions $f$ in $C(M_1 \times M_2)$ component-wise. So in $X$ an arbitrary element
	$$
	f = \sum_{j=1}^{d_1} \sum_{k=1}^{d_2} f_{j,k}(w_1,w_2) e_{1,j} \otimes e_{2,k},
	$$ 
	with $f_{j,k}$ continuous functions in $M_1 \times M_2$ can be approximated using polynomials in $(w_1, w_2),$ say $p_{j,k},$ by breaking it into finite sums of polynomials in two variables $w_1, \overline{w_1}$ and $w_2, \overline{w_2}.$ Hence the Stone-Weierstrass ensures the statement. 
\end{proof}

Now consider the multiplication in the $X$ being the one defined in the previous subsection, denoted with $\circledcirc.$ We proceed by looking at how the product $\circledcirc$ is compatible with the norm in this Banach space $X.$ Thus if we move to the operator norm we need it to be equivalent with the norm defined on $X.$

\begin{defn}\label{defnormX}
	For $f\in X$ we set
	$$
	\| f \| := \underset{|g|_\infty \leq 1}{\sup} |f \circledcirc g|_\infty .
	$$
\end{defn}

This is a norm in $X$ and it is equivalent with $|\cdot |_\infty .$

\begin{prop}
	There is a $C$, depending only on $M_1 \times M_2$ and $\Lambda_i ,$ for $i=1,2$ such that
	\begin{align}
		\|f \circledcirc g \| \leq \|f \| \|g \|, \label{eq2.17}\\
		|f|_\infty \leq \| f \| \leq C |f|_\infty .\label{eq2.18}
	\end{align}
\end{prop}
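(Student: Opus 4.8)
The plan is to read $\|\cdot\|$ as the operator norm of the left–regular representation $g \mapsto f \circledcirc g$ acting on the Banach space $(X,|\cdot|_\infty)$, and to establish the three inequalities in turn. A preliminary observation, used throughout, is that Definition~\ref{defnormX} together with the bilinearity of $\circledcirc$ yields the basic bound $|f \circledcirc u|_\infty \le \|f\|\,|u|_\infty$ for \emph{every} $u \in X$: for $u \neq 0$ one applies the definition to $u/|u|_\infty$ and uses homogeneity.

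For the submultiplicativity \eqref{eq2.17} I would invoke associativity of the product. The space $X$ with $\circledcirc$ is a commutative associative algebra, being the tensor product of the one–variable algebras $C_{\Lambda_1}(M_1)$ and $C_{\Lambda_2}(M_2)$ via Definition~\ref{basisformula} (cf.\ Proposition~\ref{prop2.2}); alternatively, associativity and commutativity descend from pointwise multiplication of scalar functions because $\mathcal{L}$ is an injective algebra homomorphism by Proposition~\ref{prop2.9}. Granting $(f\circledcirc g)\circledcirc h = f\circledcirc(g\circledcirc h)$, for any $h$ with $|h|_\infty \le 1$ the basic bound gives $|(f\circledcirc g)\circledcirc h|_\infty = |f\circledcirc(g\circledcirc h)|_\infty \le \|f\|\,|g\circledcirc h|_\infty \le \|f\|\,\|g\|\,|h|_\infty \le \|f\|\,\|g\|$, and taking the supremum over such $h$ proves \eqref{eq2.17}.

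The lower bound $|f|_\infty \le \|f\|$ in \eqref{eq2.18} is immediate by testing against the unit $\mathbf{1}$ of the algebra, i.e.\ the constant function all of whose entries equal $1$: then $|\mathbf{1}|_\infty = 1$ and $f\circledcirc\mathbf{1}=f$, so $\|f\| \ge |f\circledcirc\mathbf{1}|_\infty = |f|_\infty$. The upper bound $\|f\|\le C|f|_\infty$ is the one genuinely requiring work, and for it I would estimate the explicit elementwise formula \eqref{eq2.16} directly. That formula writes $(f\circledcirc g)_{jk}$ as a sum of four bilinear groups: the pointwise product $f_{jk}g_{jk}$; a $w_1$-sum over $l\neq j$; a $w_2$-sum over $m\neq k$; and a $w_1w_2$-sum over $l\neq j,\ m\neq k$. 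Each difference factor is controlled by $|f|_\infty$ or $|g|_\infty$, namely $|f_{jk}-f_{lk}|\le 2|f|_\infty$ and $|(f_{jk}-f_{jm})-(f_{lk}-f_{lm})|\le 4|f|_\infty$, and likewise for $g$; the monomials $w_1,w_2$ are bounded by constants $W_1,W_2$ on the compact $M_1\times M_2$, while the coefficients $\sigma_{1,jl},\sigma_{2,km}$ are fixed numbers determined by $\Lambda_1,\Lambda_2$ through \eqref{sigma1jl}--\eqref{sigma2km}, and the sums contain at most $d_1-1$, $d_2-1$, resp.\ $(d_1-1)(d_2-1)$ terms. Collecting these bounds gives $|(f\circledcirc g)_{jk}|\le C|f|_\infty|g|_\infty$ with a single constant $C$ depending only on $W_1,W_2$ (hence on $M_1\times M_2$) and on $\Lambda_1,\Lambda_2$ (which fix $d_1,d_2$ and the $\sigma$'s); maximizing over $j,k$ yields $|f\circledcirc g|_\infty\le C|f|_\infty|g|_\infty$, whence $\|f\|=\sup_{|g|_\infty\le 1}|f\circledcirc g|_\infty\le C|f|_\infty$.

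I expect the upper bound to be the main obstacle, but only at the level of bookkeeping: conceptually it is simply the statement that \eqref{eq2.16} is bilinear in $f$ and $g$ with coefficients bounded uniformly on the compact $M_1\times M_2$, so once the explicit formula is in hand the estimate is routine. The delicate point to get right is keeping the constant $C$ independent of $f$ and $g$ and tracking that it involves only the data $M_1\times M_2$ and $\Lambda_1,\Lambda_2$; as a by-product the upper bound also shows $\|f\|<\infty$, confirming that $\|\cdot\|$ is a genuine norm on $X$ equivalent to $|\cdot|_\infty$.
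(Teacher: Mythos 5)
Your proof is correct and follows essentially the same route as the paper's: the lower bound in \eqref{eq2.18} by testing against the unit $\mathbf{1}$, the upper bound by direct estimation of the explicit bilinear formula \eqref{eq2.16} with the $\sigma$'s and $w_1,w_2$ bounded on the compact set, and \eqref{eq2.17} via the associativity trick $|(f\circledcirc g)\circledcirc h|_\infty\le\|f\|\,\|g\|\,|h|_\infty$ (the paper leaves associativity implicit; your justification through the tensor-product algebra structure of Definition~\ref{basisformula} is the right one). One caveat on your aside: the claim that $\mathcal{L}$ is an \emph{injective} algebra homomorphism is false in general, since injectivity of $\mathcal{L}$ amounts to semi-simplicity of the algebra, which by Theorem~2.18 of \cite{oam} fails when $M$ contains isolated critical values of $p$ --- but this alternative justification is not load-bearing in your argument, so nothing is lost.
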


\begin{proof}
	It is clear that 
	$$
	|f|_\infty = |f \circledcirc {\bf 1} |_\infty \leq \| f\|.
	$$ 
	It is easily seen from the definition of the polyproduct as well as from \eqref{eq2.16} that there exists a constant $C$ such that 
	$$ |f \circledcirc g |_\infty \leq C |f|_\infty |g|_\infty, $$
	so we get $\|f \| \leq C |f|_\infty. $
	
	Then we have
	$$
	| f \circledcirc g \circledcirc h |_\infty \leq \| f \| |g \circledcirc h|_\infty \leq \| f \| \| g \| |h|_\infty
	$$
	which implies \eqref{eq2.17}.	
\end{proof}

\begin{defn}
	The Banach space $C(M_1\times M_2, \mathbb{C}^{d_1\times d_2})$ of continuous functions from compact $M_1\times M_2$ into $\mathbb{C}^{d_1 \times d_2}$ with the operator norm and product $\circledcirc$ becomes a Banach algebra and is denoted by $C_{\Lambda_1 \times \Lambda_2}(M_1 \times M_2 ).$
\end{defn}

A unit in the algebra is then set as follows, 
$ {\bf 1} : (w_1,w_2) \mapsto \sum_{j,k} e_{1,j} \otimes e_{2,k}.$
Here $e_{1,j}$ and $e_{2,k}$ are units in $C(M_1,\mathbb{C}^{d_1})$ and $C(M_2,\mathbb{C}^{d_2}),$ respectively.
Therefore $(X, \circledcirc , \| \cdot \| )$ is a unital Banach algebra with unit $\bf 1,$ denoted $C_{\Lambda_1 \times \Lambda_2}(M_1 \times M_2).$

Moving to algebra setup we further need to find a cross norm, say $\alpha,$ on the completed tensor algebras $ C_{\Lambda_1}(M_1) \hat{\otimes}_\alpha C_{\Lambda_2}(M_2)$ which will help in canonically identify this completed tensor algebra with the algebra $C_{\Lambda_1 \times \Lambda_2}(M_1 \times M_2).$  Hence the Gelfand theory can be applied to the multicentric representation of two commuting operators successfully. This is carefully worked out in the next section.

\section{Application of Gelfand theory}

\subsection{Identification of the Banach algebra}

Recall from \cite{ap} that if $A_1$ and $A_2$ are Banach algebras, then $A_1 \hat{\otimes}_{\varepsilon} A_2$ and $A_1 \hat{\otimes}_{\gamma} A_2$ are Banach algebras. Moreover, if $A_1$ and $A_2$ are commutative with maximal ideal spaces $\mathfrak{M}_1$ and $\mathfrak{M}_2,$ respectively, then $\mathfrak{M}_1 \times \mathfrak{M}_2$ is the maximal ideal space of both $A_1 \hat{\otimes}_{\varepsilon} A_2$ and $A_1 \hat{\otimes}_{\gamma} A_2.$

In order to apply these results to our discussion we modify the complex valued case into vector-valued case following Ryan's book \cite{sprR}.

In the complex valued case Ryan \cite{sprR} has proved that for a Banach space $Y$ and a compact set $M\subset \mathbb{C}$ the injective tensor product $C(K) \hat{\otimes}_\varepsilon Y$ can be identified with the Banach space $C(M,Y)$ and the norm on this space is given by $\|f\|_\infty =\sup \lbrace \|f(t)\|: t\in M \rbrace.$ Moreover, this identification can be applied to get a representation of a space of continuous functions of two variables as an injective tensor product of two spaces of continuous functions, that is, for $M_1, M_2$ compact spaces,
$$
C(M_1) \hat{\otimes}_\varepsilon C(M_2) = C(M_1 \times M_2),
$$
since $C(M_1 \times M_2)$ can be identified with $C(M_1,C(M_2)).$

Now let $Y$ be a Banach space and $M\subset \mathbb{C}$ a compact set. Let $\lbrace 1,2, \dots , d\rbrace  $ be an index set and let $M_d=M \times \lbrace 1,2, \dots , d \rbrace.$ In $C(M_d,\mathbb{C}),$ the continuous functions $f$ are given by $(w,j) \mapsto f_j(w)$ and the norm is the $\max$ norm. 

For simplicity we use $C(M_d)$ instead of $C(M_d,\mathbb{C}).$ We can then identify $C(M,\mathbb{C}^{d})$ with $C(M_d)$ where for $ x\in C(M, \mathbb{C}^{d})$ the norm is given by
$$
\| x \| = \max_{w\in M} \max_j |x_j(w)|
$$
and it is the same as in $C(M_d).$ 

Now recall that if $f$ is a continuous function on a closed interval, or more generally a compact set, then it is bounded and the supremum is attained by the Weierstrass extreme value theorem \cite{stew}, so we can replace the supremum by the maximum.
Hence following \cite{sprR} we have 
$$
C(M_d) \hat{\otimes}_\varepsilon Y = C(M_d,Y).
$$
Using the identification of $C(M,\mathbb{C}^{d})$ with $C(M_d,\mathbb{C})$ we see that we also have the identification of $C(M_d,Y)$ with $C(M,Y^{d}).$

Moving to two variable setup, we use the identifications of $C(M_1,\mathbb{C}^{d_1})$ with $C(M_{d_1})$ and $C(M_2,\mathbb{C}^{d_2})$ with $C(M_{d_2})$ and this gives
$$
C(M_{d_1}) \hat{\otimes}_\varepsilon C(M_{d_2}) = C(M_{d_1} \times M_{d_2})
$$
where for $i=1,2,$ $M_{d_i}= M_i \times \lbrace 1,2,\dots ,d_i \rbrace .$

Moreover, using the identifications of $C(M_{d_1},Y)$ with $C(M_1,Y^{d_1})$ and $C(M_{d_2},Y)$ with $C(M_2,Y^{d_2}),$ it is clear that we have
$$
C(M_1,Y^{d_1}) \hat{\otimes}_\varepsilon C(M_2,Y^{d_2}) = C(M_1 \times M_2, Y^{d_1 \times d_2}).
$$

Therefore we return now to our discussion and notations and we see that the following holds directly from the construction of the Banach algebra and the above discussion.

\begin{thm}
	\label{thm2.13} The Banach space $C(M_1 \times M_2, \mathbb{C}^{d_1 \times d_2})$ can be canonically identified with the injective tensor product $C(M_1, \mathbb{C}^{d_1}) \hat{\otimes}_\varepsilon C(M_2, \mathbb{C}^{d_2}),$ in short
	$$
	C(M_1 \times M_2, \mathbb{C}^{d_1 \times d_2}) = C(M_1, \mathbb{C}^{d_1}) \hat{\otimes}_\varepsilon C(M_2, \mathbb{C}^{d_2}).
	$$
\end{thm}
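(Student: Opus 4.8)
The plan is to assemble the chain of isometric identifications developed in the discussion preceding the statement, so that the whole theorem reduces to Ryan's identity \eqref{eq2.1} applied to suitable finite compact spaces. The central device is the index-set trick: for a compact $M \subset \mathbb{C}$ and a finite index set $\lbrace 1, \dots , d \rbrace$, one forms the compact space $M_d = M \times \lbrace 1, \dots , d \rbrace$ (a disjoint union of $d$ copies of $M$) and identifies $C(M, \mathbb{C}^{d})$ with $C(M_d)$ by sending $x$ to $(w,j) \mapsto x_j(w)$. Since the norm on $C(M, \mathbb{C}^{d})$ is $\max_{w}\max_j |x_j(w)|$ and this is exactly the sup norm on $C(M_d)$, the identification is an isometric isomorphism of Banach spaces.

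First I would record this identification applied with $i=1,2$, giving isometric isomorphisms $C(M_i, \mathbb{C}^{d_i}) = C(M_{d_i})$, where $M_{d_i} = M_i \times \lbrace 1, \dots , d_i \rbrace$. Next I would invoke the functoriality of the injective tensor product: because $\|\cdot\|_\varepsilon$ is defined purely through the unit balls of the dual spaces (Definition~\ref{injectivenorm}), and the adjoints of isometric isomorphisms carry dual unit balls onto dual unit balls, an isometric isomorphism of the factors induces an isometric isomorphism of the completed injective tensor products. Hence $C(M_1, \mathbb{C}^{d_1}) \hat{\otimes}_\varepsilon C(M_2, \mathbb{C}^{d_2}) = C(M_{d_1}) \hat{\otimes}_\varepsilon C(M_{d_2})$ isometrically.

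Then I would apply \eqref{eq2.1} to the two compact Hausdorff spaces $M_{d_1}$ and $M_{d_2}$, obtaining $C(M_{d_1}) \hat{\otimes}_\varepsilon C(M_{d_2}) = C(M_{d_1} \times M_{d_2})$. The final step is a bookkeeping identification: regrouping the product
$$
M_{d_1} \times M_{d_2} = (M_1 \times \lbrace 1,\dots ,d_1 \rbrace) \times (M_2 \times \lbrace 1,\dots ,d_2 \rbrace) = (M_1 \times M_2) \times \bigl( \lbrace 1,\dots ,d_1 \rbrace \times \lbrace 1,\dots ,d_2 \rbrace \bigr),
$$
one sees that the finite index set $\lbrace 1,\dots ,d_1 \rbrace \times \lbrace 1,\dots ,d_2 \rbrace$ parametrizes exactly the entries of a $d_1 \times d_2$ matrix. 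Applying the index-set trick in reverse, now with an index set of cardinality $d_1 d_2$ over the base $M_1 \times M_2$, identifies $C(M_{d_1} \times M_{d_2})$ isometrically with $C(M_1 \times M_2, \mathbb{C}^{d_1 \times d_2})$. Concatenating the three isometries yields the claimed equality.

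I expect the only genuinely delicate point to be the verification that every arrow in this chain preserves the norm: the matching of the componentwise max norm with the sup norm under the index-set trick, and, most importantly, the fact that the injective tensor norm transports correctly under the Banach-space isomorphisms, which is where the explicit dual-ball description of $\|\cdot\|_\varepsilon$ is used. The combinatorial regrouping of indices is routine, and the substance of the statement is carried entirely by \eqref{eq2.1}. Note that no estimate specific to the polyproduct $\circledcirc$ enters here, since the theorem concerns only the underlying Banach-space structure and not the multiplication.
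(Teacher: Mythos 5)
Your proposal is correct and follows essentially the same route as the paper: the identification $C(M_i,\mathbb{C}^{d_i}) = C(M_{d_i})$ via the auxiliary compact spaces $M_{d_i} = M_i \times \lbrace 1,\dots,d_i\rbrace$, an application of \eqref{eq2.1} to $M_{d_1}$ and $M_{d_2}$, and the regrouping of $M_{d_1}\times M_{d_2}$ as $(M_1\times M_2)\times(\lbrace 1,\dots,d_1\rbrace\times\lbrace 1,\dots,d_2\rbrace)$ to recover $C(M_1\times M_2,\mathbb{C}^{d_1\times d_2})$. Your explicit justification that the injective norm transports under isometric isomorphisms (via the dual-ball description) is a detail the paper leaves implicit, but it is the same argument.
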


\subsection{Characters of the algebra }

We start by recalling a few facts from the literature \cite{tmj3}. Let $A_1$ and $A_2$ be two function algebras on compact Hausdorff spaces $M_1$ and $M_2,$ respectively, where by function algebras we mean uniformly closed subalgebras of the continuous complex-valued functions which contain the constants and separate the points.
Let $A_1 \hat{\otimes}_\varepsilon A_2$ be the completion of $A_1 \otimes_{ a} A_2$ (the algebraic tensor product of $A_1$ and $A_2$ ) under the $\varepsilon-$norm. The $\varepsilon-$norm is identical with the uniform norm on $M_1\times M_2$ and $C(M_1) \hat{\otimes}_\varepsilon C(M_2) = C(M_1\times M_2).$ Thus $A_1 \hat{\otimes}_\varepsilon A_2$ is a Banach algebra. It is easily seen that $A_1 \hat{\otimes}_\varepsilon A_2$ becomes a function algebra on $M_1 \times M_2,$ denoted by $\mathcal{U}.$

Let $\mathfrak{M}(A)$ denote the maximal ideal space of an algebra $A.$ It is known that $\mathfrak{M}(\mathcal{U})$ is homeomorphic to $\mathfrak{M}(A_1) \times \mathfrak{M}(A_2),$ see Theorem  2 in \cite{tmj}, i.e. for every $h\in \mathcal{U}$ there corresponds a unique $(\eta_1,\eta_2)\in \mathfrak{M}(A_1) \times \mathfrak{M}(A_2)$ such that $h=\eta_1 \otimes \eta_2,$ which means that if $F= \sum_{i=1}^{n} f_1 \otimes g_i \in A_1 \otimes_{ a} A_2$ then $h(F)= \sum_{i=1}^{n} \eta_1(f_i) \eta_2(g_i).$

For $F\in \mathcal{U}$ and for a fixed $y_0\in M_2$ we define $F_{y_0} $ by $F_{y_0}(x)= F(x,y_0),$ for $x\in M_1,$ and similarly is defined $F_{x_0}.$

\begin{lem}[Lemma 1 in \cite{tmj3}]
	\label{lemma 3.1} Let $F \in\mathcal{U}.$ Then every $F_{y_0}\in A_1$ and every $F_{x_0}\in A_2.$
\end{lem}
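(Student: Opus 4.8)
The plan is to prove the statement first on the dense subalgebra $A_1 \otimes_a A_2$ and then pass to the uniform limit, exploiting two facts already recorded in the excerpt: that the $\varepsilon$-norm on $\mathcal{U}=A_1\hat\otimes_\varepsilon A_2$ coincides with the uniform norm on $M_1\times M_2$, and that $A_1$ (being a function algebra) is uniformly closed. I will only treat $F_{y_0}\in A_1$, since $F_{x_0}\in A_2$ follows by the symmetric argument with the roles of the two factors exchanged.

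First I would check the claim on elementary tensors. If $F=\sum_{i=1}^{n} f_i\otimes g_i \in A_1\otimes_a A_2$, then by definition $F(x,y)=\sum_{i=1}^{n} f_i(x)g_i(y)$, so fixing $y_0\in M_2$ gives
$$
F_{y_0}(x)=\sum_{i=1}^{n} g_i(y_0)\, f_i(x),\qquad x\in M_1.
$$
Each coefficient $g_i(y_0)$ is a complex scalar and each $f_i$ lies in $A_1$; since $A_1$ is in particular a linear subspace of $C(M_1)$, the finite linear combination $F_{y_0}$ belongs to $A_1$.

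Next I would record that the slice map $S_{y_0}\colon F\mapsto F_{y_0}$ is a contraction from $(\mathcal{U},\|\cdot\|_\varepsilon)$ into $(C(M_1),\|\cdot\|_\infty)$. Indeed, because the $\varepsilon$-norm agrees with the uniform norm on $M_1\times M_2$, for any $F,G\in\mathcal{U}$ we have
$$
\|F_{y_0}-G_{y_0}\|_\infty=\sup_{x\in M_1}|F(x,y_0)-G(x,y_0)|\le \sup_{(x,y)\in M_1\times M_2}|F(x,y)-G(x,y)|=\|F-G\|_\varepsilon,
$$
so $S_{y_0}$ is in particular continuous. Note also that $F_{y_0}$ is genuinely continuous on $M_1$, being the restriction of the continuous function $F$ to the slice $M_1\times\{y_0\}$.

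Finally, given an arbitrary $F\in\mathcal{U}$, by definition of $\mathcal{U}$ as the completion of $A_1\otimes_a A_2$ under the $\varepsilon$-norm there is a sequence $F^{(n)}\in A_1\otimes_a A_2$ with $\|F-F^{(n)}\|_\varepsilon\to 0$. By the contraction estimate, $F^{(n)}_{y_0}\to F_{y_0}$ uniformly on $M_1$; each $F^{(n)}_{y_0}$ lies in $A_1$ by the first step, and $A_1$ is uniformly closed, so the uniform limit $F_{y_0}$ lies in $A_1$ as well. I expect the only delicate point to be the justification that the $\varepsilon$-norm really is the sup-norm on the product, which is precisely what makes the slice map a contraction; once that identification (recalled in the excerpt via $C(M_1)\hat\otimes_\varepsilon C(M_2)=C(M_1\times M_2)$) is in hand, everything else is routine.
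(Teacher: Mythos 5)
Your proof is correct and follows essentially the same route as the paper: both arguments verify the claim on finite sums $\sum_i f_i\otimes g_i$ (where the slice is the linear combination $\sum_i g_i(y_0)f_i\in A_1$), observe that slicing at $y_0$ is continuous for the $\varepsilon$-norm, and conclude by density of $A_1\otimes_a A_2$ in $\mathcal{U}$ together with the uniform closedness of $A_1$. The only cosmetic difference is that the paper packages the slice as the map $T_{y_0}=\mathrm{id}\otimes\eta_{y_0}$ induced by the evaluation functional $\eta_{y_0}\in A_2^{*}$ and extends it from the algebraic tensor product, whereas you work directly with restriction of functions on $M_1\times M_2$, using the identification of the $\varepsilon$-norm with the sup-norm; these are equivalent formulations of the same contraction estimate.
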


The proof, as in \cite{tmj3}, goes as follows.
\begin{proof}
	Let $\eta_{y_0}$ be the functional which associates with $g\in A_2$ the value $g(y_0).$ Then $\eta_{y_0}\in A_2^{*}$ (the dual space of $A_2$ ). We define a mapping $T_{y_0}$ of $A_1 \otimes_{ a}A_2$ into $A_1$ by $T_{y_0}(\sum_i f_i \otimes g_i) = \sum \varrho_i(y_0)f_i $ for $\sum f_i \otimes g_i \in A_1 \otimes_{ a}A_2.$ $T_{y_0}$ is continuous, so extended to $\mathcal{U}.$ Let $\lbrace \sum f_i^{(n)}\otimes g_i^{(n)} \rbrace$ be a sequence such that $ \sum f_i^{(n)}\otimes g_i^{(n)} \rightarrow F. $ Then, $T_{y_0}(F) \in A_1,$ and $(T_{y_0} (F))(x) = \lim \sum f_i^{(n)}(x) g_i^{(n)}(y_0) = F_{y_0}(x)$ for $x\in A_2,$ which completes the proof.
\end{proof}

\begin{rem}
	For a commutative Banach algebra with unit, all characters - complex homomorphisms - are automatically bounded and of norm $1.$ The focus is then on maximal ideals since these are kernels of characters.
\end{rem} 

Going back to our discussion, if we denote by $A_1$ the commutative Banach algebra with unit $C_{\Lambda_1}(M_1)$ and by $A_2$ the commutative Banach algebra with unit $C_{\Lambda_2}(M_2),$ we have that $X= A_1 \hat{\otimes}_\varepsilon A_2,$ where $X= C_{\Lambda_1 \times \Lambda_2}(M_1 \times M_2),$ is a Banach algebra with unit as defined in the previous section. Hence it is easily seen from the discussion above that we have the identification
$$
\mathfrak{M}(A_1) \times \mathfrak{M}(A_2) = \mathfrak{M}(X).
$$
In other words, one gets all the characters of the Banach algebra $X$ out of the characters of the Banach algebras $A_1$ and $A_2.$ This is just Theorem $2$ in \cite{tmj}.

\begin{thm}[Theorem $2$ in \cite{tmj}]
	Let $A_1$ and $A_2$ be commutative Banach algebras. If $A_1 \hat{\otimes}_\alpha A_2$ is a Banach algebra for the cross norm $\alpha$ not less than $\varepsilon -$norm, then $\mathfrak{M}(A_1 \hat{\otimes}_\alpha  A_2)$ is homeomorphic with the product space of $\mathfrak{M}(A_1)$ and $\mathfrak{M}(A_2).$
\end{thm}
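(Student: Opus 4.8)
The statement is a known result from the literature (Theorem 2 in \cite{tmj}), so the plan is essentially to recall the argument in a self-contained way using the machinery already assembled in this section. First I would fix notation: let $\mathfrak{M}_1 = \mathfrak{M}(A_1)$ and $\mathfrak{M}_2 = \mathfrak{M}(A_2)$ be the maximal ideal spaces, equivalently the character spaces, of the commutative Banach algebras $A_1$ and $A_2$. Since $A_1 \hat{\otimes}_\alpha A_2$ is by hypothesis a Banach algebra, its maximal ideal space $\mathfrak{M}(A_1 \hat{\otimes}_\alpha A_2)$ is precisely its set of nonzero multiplicative linear functionals (characters). The goal is to exhibit a homeomorphism between $\mathfrak{M}_1 \times \mathfrak{M}_2$ (with the product of the weak-$*$ topologies) and $\mathfrak{M}(A_1 \hat{\otimes}_\alpha A_2)$ (with its weak-$*$ topology).

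The core of the argument is to define, for each pair $(\eta_1,\eta_2)\in \mathfrak{M}_1\times\mathfrak{M}_2$, a character $\eta_1\otimes\eta_2$ on the algebraic tensor product by
$$
(\eta_1\otimes\eta_2)\Bigl(\sum_{i=1}^n f_i\otimes g_i\Bigr) = \sum_{i=1}^n \eta_1(f_i)\,\eta_2(g_i),
$$
and to check three things. First, this functional is well defined and multiplicative on $A_1\otimes_a A_2$; multiplicativity follows from $(\eta_1\otimes\eta_2)\bigl((f\otimes g)(f'\otimes g')\bigr) = \eta_1(ff')\eta_2(gg') = \eta_1(f)\eta_1(f')\eta_2(g)\eta_2(g')$ together with bilinearity. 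Second, it is continuous for the cross norm $\alpha$: since $\alpha \ge \varepsilon$ and each character has norm $1$, one estimates $|(\eta_1\otimes\eta_2)(u)| \le \|u\|_\varepsilon \le \|u\|_\alpha$, so $\eta_1\otimes\eta_2$ extends uniquely to a character of the completion $A_1\hat{\otimes}_\alpha A_2$. This gives a well-defined map $\Phi:\mathfrak{M}_1\times\mathfrak{M}_2 \to \mathfrak{M}(A_1\hat{\otimes}_\alpha A_2)$.

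Next I would establish that $\Phi$ is a bijection. Injectivity is immediate: restricting a character $\eta_1\otimes\eta_2$ to the subalgebras $A_1\otimes 1$ and $1\otimes A_2$ recovers $\eta_1$ and $\eta_2$ respectively, so distinct pairs give distinct characters. Surjectivity is the substantive step: given any character $h$ of $A_1\hat{\otimes}_\alpha A_2$, I would set $\eta_1(f) = h(f\otimes 1)$ and $\eta_2(g) = h(1\otimes g)$. These are characters of $A_1$ and $A_2$ (nonzero because $h(1\otimes 1)=1$), and on elementary tensors $h(f\otimes g) = h((f\otimes 1)(1\otimes g)) = \eta_1(f)\eta_2(g)$, which by bilinearity and density forces $h=\Phi(\eta_1,\eta_2)$. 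Finally, $\Phi$ is continuous since $\eta_1\otimes\eta_2$ evaluated at a fixed $u=\sum f_i\otimes g_i$ depends continuously (indeed polynomially) on $(\eta_1,\eta_2)$ in the weak-$*$ topologies, and a weak-$*$ convergence argument upgrades this to all $u$ via the density of $A_1\otimes_a A_2$ and uniform boundedness of the characters. Since $\mathfrak{M}_1\times\mathfrak{M}_2$ is compact (a product of weak-$*$ compact character spaces) and the target is Hausdorff, the continuous bijection $\Phi$ is automatically a homeomorphism.

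The main obstacle is surjectivity, specifically verifying that an arbitrary character $h$ must factor as a product of a character on each factor. The delicate point is ensuring that the restrictions $\eta_1,\eta_2$ are genuinely multiplicative on the full completed algebras and that the factorization $h=\eta_1\otimes\eta_2$ persists after completion; this is exactly where the condition $\alpha\ge\varepsilon$ is used, guaranteeing continuity of the candidate functionals so that the identity on the dense subalgebra $A_1\otimes_a A_2$ extends. The density and continuity estimates invoked here are precisely those recorded in Lemma~\ref{lemma 3.1} and the surrounding discussion, so in the write-up I would cite those rather than re-deriving them.
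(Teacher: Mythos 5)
The first thing to note is that the paper never proves this theorem: it is quoted as Theorem 2 of Tomiyama \cite{tmj} and used as a black box, the only internal trace being the paragraph just before it, which records the correspondence $h=\eta_1\otimes\eta_2$ that your map $\Phi$ realizes. So your proposal is not paralleling a proof in the paper but supplying one, and what you wrote is the standard argument; the $\varepsilon$-norm estimate $|(\eta_1\otimes\eta_2)(u)|\le\|u\|_\varepsilon\le\|u\|_\alpha$ is exactly where the hypothesis on $\alpha$ enters, and the extension, bijectivity and compactness steps are clean. However, it proves a weaker statement than the one quoted: every step in which you touch the element $1$ silently assumes that $A_1$ and $A_2$ are \emph{unital}, a hypothesis the statement does not make. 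Concretely, units enter in three essential places: injectivity (restriction to $A_1\otimes 1$ and $1\otimes A_2$), surjectivity (the definitions $\eta_1(f)=h(f\otimes 1)$, $\eta_2(g)=h(1\otimes g)$ and the normalization $h(1\otimes 1)=1$), and the final topological step, where weak-$*$ compactness of $\mathfrak{M}(A_1)\times\mathfrak{M}(A_2)$ is what turns a continuous bijection into a homeomorphism. For non-unital algebras the character spaces are only locally compact, and the element $f\otimes 1$ does not exist, so all three steps fail as written.

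Both defects are repairable, which is why this is a gap rather than a wrong approach. For surjectivity without units, pick $f_0\in A_1$, $g_0\in A_2$ with $c:=h(f_0\otimes g_0)\neq 0$ (possible since $h$ is nonzero and continuous on the completion, hence nonzero at some elementary tensor), and set $\eta_1(f):=h(ff_0\otimes g_0)/c$ and $\eta_2(g):=h(f_0\otimes gg_0)/c$; multiplicativity of $h$ gives $\eta_1(f)\eta_2(g)=h(ff_0^2\otimes gg_0^2)/c^2=h(f\otimes g)$, and likewise multiplicativity of each $\eta_i$. The same formulas show that $(\eta_1,\eta_2)$ depends weak-$*$ continuously on $h$, which replaces the compactness shortcut by a direct proof that $\Phi^{-1}$ is continuous. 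That said, the paper only ever applies the theorem to the unital algebras $A_1=C_{\Lambda_1}(M_1)$ and $A_2=C_{\Lambda_2}(M_2)$, so your unital proof covers everything actually needed here; you should simply either add ``unital'' to the hypotheses or incorporate the repair if you intend it as a proof of the theorem as quoted. A last small point: Lemma~\ref{lemma 3.1} concerns the slice functions $F_{y_0}$ of elements of the function algebra $\mathcal{U}$; it contains neither the density statement nor the continuity estimates your closing paragraph appeals to, so that citation should be dropped or replaced by the standard Gelfand-theory facts you actually use.
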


Considering now the dual of $\mathbb{C}^{d_1 \times d_2}$ where the functionals $\eta $ are of the form
$$
\eta : a= \sum_{j=1}^{d_1} \sum_{k=1}^{d_2} \alpha_{jk} e_{1,j} \otimes e_{2,k} \mapsto \sum_j \sum_k \eta_{1,j} \eta_{2,k} \alpha_{jk}.
$$
If we require $\eta (1) = 1$ it implies that $\sum_j \sum_k \eta_{1,j} \eta_{2,k} = 1.$

Therefore, in the multicentric setup, the characters of $X$ are given by

\begin{equation}\label{eq3.1}
	\chi_{(z_1,z_2)} : f \mapsto \sum_{j=1}^{d_1} \sum_{k=1}^{d_2} \delta_{j,1}(z_1) \delta_{k,2}(z_2) f_{j,k}(p_1(z_1),p_2(z_2))
\end{equation}
and the space of all characters of $X$ will be given by the set
$$
\mathcal{X}_{X }= \lbrace \chi_{(z_1,z_2)}: (z_1,z_2)\in p_1^{-1}(M_1) \times p_2^{-1}(M_2) \rbrace.
$$

\subsection{Gelfand transform and the spectrum}

In Section $2$ we have constructed the Banach algebra $C_{\Lambda_1 \times \Lambda_2}(M_1\times M_2)$ by defining a product and an operator norm. From Proposition \ref{prop2.9} we can see that the function $\varphi$ in (\ref{eq1.2}) appears as the Gelfand transform in the algebra $C_{\Lambda_1 \times \Lambda_2 }(M_1\times M_2).$ 

When working with holomorphic function $\varphi$ is it natural to consider it as "principal" function and represented by vector function $f,$ while in a less smooth context it is better to reverse their role. Simply because $f$ can be any continuous function but the behaviour of $\varphi$ at critical points is in general complicated.

Let $\mathcal{M}(X)$ be the set of all multiplicative functionals on Banach algebra $X.$ The Gelfand transform of $f$ is defined by
$$
\Gamma : X \rightarrow \mathcal{M}(X), \quad f \mapsto \Gamma (f)
$$
where $\Gamma(f)(\eta) = \eta (f),$ for every $\eta\in\mathcal{M}.$
For $f\in X, \text{ } f$ is invertible iff $\eta (f) \neq 0$ for all $\eta \in \mathcal{M}(X) \setminus \lbrace 0 \rbrace .$ Let ${\rm Inv}(X)$ denote the set of all invertible elements in $X.$ We have
\begin{align}
	\lambda \notin \sigma (f) &\Leftrightarrow f-\lambda 1 \in {\rm Inv}(X) \nonumber\\
	&\Leftrightarrow \forall \eta \in \mathcal{M}(X)\setminus \lbrace 0 \rbrace \quad \eta(f-\lambda 1) \neq 0 \nonumber\\
	&\Leftrightarrow \forall \eta \in \mathcal{M}(X)\setminus \lbrace 0 \rbrace \quad \lambda \neq \eta(f) \nonumber\\
	&\Leftrightarrow \lambda \notin \lbrace \eta(f) : \eta \in \mathcal{M}(X)\setminus \lbrace 0 \rbrace \rbrace.
\end{align}
This means that 
$$
\sigma(f) = \lbrace \eta(f) : \eta \in \mathcal{M}(X)\setminus \lbrace 0 \rbrace \rbrace = \lbrace \Gamma(f)(\eta) : \eta \in \mathcal{M}(X)\setminus \lbrace 0 \rbrace \rbrace.
$$

Since the set of all characters of $X$ is
$
\mathcal{X}_{X}
$  defined above and 
$$
\chi_{(z_1,z_2)} (f) = \sum_{j=1}^{d_1} \sum_{k=1}^{d_2} \delta_{j,1}(z_1) \delta_{k,2}(z_2) f_{j,k}(p_1(z_1),p_2(z_2))
$$
we can identify $ \chi_{(z_1,z_2)} $ with $(z_1,z_2)$ and consequently $\mathcal{X}_{X}  $ with $ p_1^{-1}(M_1) \times p_2^{-1}(M_2). $ Hence the Gelfand transform $\hat{f}$ can be seen as a function of $(z_1,z_2)\in p_1^{-1}(M_1) \times p_2^{-1}(M_2).$
\begin{defn}
	\label{gelfandt2}
	Given $f\in C_{\Lambda_1 \times \Lambda_2} (M_1 \times M_2 )$ we set
	$$ \hat{f} : p_1^{-1}(M_1) \times p_2^{-1}(M_2) \rightarrow \mathbb{C}^{2} $$ 
	$$
	\hat{f} : (z_1,z_2) \mapsto \hat{f} ((z_1,z_2)) = \sum_{j=1}^{d_1} \sum_{k=1}^{d_2} \delta_{j,1}(z_1) \delta_{k,2}(z_2) f_{j,k}(p_1(z_1),p_2(z_2)).
	$$
\end{defn}
Hence, the multicentric representation operator $\mathcal{L}$ acts as the Gelfand transformation 
$$
\mathcal{L} : f \mapsto \hat{f}.
$$

Let $K_1\times K_2 = p_1^{-1}(M_1) \times p_2^{-1}(M_2) .$ For $f \in C_{\Lambda_1 \times \Lambda_2} (M_1 \times M_2 ) ,$ if the algebra $C_{\Lambda_1 \times \Lambda_2} (M_1 \times M_2 )$ is commutative, then the spectrum $\sigma(f)$ can be expressed in terms of the Gelfand transform
$$
\sigma(f) = \lbrace \hat{f}((z_1,z_2)): (z_1,z_2) \in K_1\times K_2 \rbrace .
$$

\subsection{Semi-simplicity property of the algebra}

Recall that an algebra $\mathcal{A}$ is {\it semi-simple} if ${\rm rad}\mathcal{A}= \lbrace 0 \rbrace .$

From \cite{tmj} we learn that a Banach space $E$ is called to satisfy \textit{the condition of approximation} if for every compact $K \subset E$ and $\varepsilon > 0 $ there exists a continuous linear mapping $u$ of finite rank on $E$ into itself such that $\|u(x)-x\|<\varepsilon$ for all $x\in K.$ The Banach algebra $C(\Omega),$ with $\Omega$ a compact Hausdorff space, satisfy this condition. Therefore we have the following result, which is Theorem $4$ in \cite{tmj}.

\begin{thm}
	\label{semisimplethm} Let $A_1$ and $A_2$ be commutative Banach algebras.
	\begin{itemize}
		\item[1.] If either $A_1$ or $A_2$ satisfy the condition of approximation then $A_1 \hat{\otimes}_\gamma A_2$ is semi-simple iff $A_1$ and $A_2$ are semi-simple.
		\item[2.] Suppose that $A_1 \hat{\otimes}_\varepsilon A_2$ becomes a Banach algebra, then $A_1 \hat{\otimes}_\varepsilon A_2$ is semi-simple if and only if $A_1$ and $A_2$ are semi-simple.
	\end{itemize}
\end{thm}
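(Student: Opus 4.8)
The plan is to translate semi-simplicity into the triviality of the radical and then exploit the description of the characters of the tensor algebra recalled above. A commutative Banach algebra is semi-simple precisely when the intersection of the kernels of its characters is $\{0\}$. Since every character of $A_1 \hat{\otimes}_\alpha A_2$ (for $\alpha=\varepsilon$ and, when it is an algebra, for $\alpha=\gamma$) is of the form $\eta_1 \otimes \eta_2$ with $\eta_i \in \mathfrak{M}(A_i)$, an element $u$ belongs to ${\rm rad}(A_1 \hat{\otimes}_\alpha A_2)$ if and only if $(\eta_1 \otimes \eta_2)(u)=0$ for all $\eta_1 \in \mathfrak{M}(A_1)$ and $\eta_2 \in \mathfrak{M}(A_2)$. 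This reformulation carries both implications.

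For the implication that semi-simplicity of the tensor algebra forces semi-simplicity of the factors, I would argue contrapositively. Assuming $A_1$ is not semi-simple, choose $a \neq 0$ in ${\rm rad}(A_1)$ and any $b \neq 0$ in $A_2$. Then $(\eta_1 \otimes \eta_2)(a \otimes b) = \eta_1(a)\,\eta_2(b) = 0$ for every character, so $a \otimes b$ lies in the radical; as $\gamma$ and $\varepsilon$ are cross norms we have $\alpha(a \otimes b) = \|a\|\,\|b\| \neq 0$, hence $a \otimes b \neq 0$ and the tensor algebra is not semi-simple. The symmetric choice handles the case where $A_2$ fails to be semi-simple.

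The substance is the converse, and here the tool is the pair of slice maps: for $\psi \in A_1^*$ and $\phi \in A_2^*$ set $L_\psi(x \otimes y) = \psi(x)\,y$ and $R_\phi(x \otimes y) = \phi(y)\,x$ on elementary tensors. A direct estimate shows that each extends to a bounded operator on both $A_1 \hat{\otimes}_\gamma A_2$ and $A_1 \hat{\otimes}_\varepsilon A_2$, of norm at most $\|\psi\|$ respectively $\|\phi\|$, and that $(\eta_1 \otimes \eta_2)(u) = \eta_1(R_{\eta_2}(u)) = \eta_2(L_{\eta_1}(u))$. Now let $u$ lie in the radical. Fixing a character $\eta_2$, the element $R_{\eta_2}(u) \in A_1$ is annihilated by every character $\eta_1$, hence lies in ${\rm rad}(A_1) = \{0\}$; thus $R_{\eta_2}(u)=0$, i.e. $(\psi \otimes \eta_2)(u)=\psi(R_{\eta_2}(u))=0$ for all $\psi \in A_1^*$. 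Fixing then an arbitrary $\psi \in A_1^*$, the element $L_\psi(u) \in A_2$ satisfies $\eta_2(L_\psi(u)) = (\psi \otimes \eta_2)(u) = 0$ for every character $\eta_2$, so by semi-simplicity of $A_2$ it is zero. Consequently $(\psi \otimes \phi)(u) = \phi(L_\psi(u)) = 0$ for all $\psi \in A_1^*$, $\phi \in A_2^*$, which is exactly the statement that $u$ is sent to $0$ by the canonical map $j \colon A_1 \hat{\otimes}_\alpha A_2 \to A_1 \hat{\otimes}_\varepsilon A_2$.

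It remains to pass from $j(u)=0$ to $u=0$, and this is the step where the two cases diverge and where I expect the only genuine difficulty to sit. In case 2, $\alpha=\varepsilon$ and $j$ is the identity, so $j(u)=0$ says that the $\varepsilon$-norm of $u$, namely $\sup\{\,|(\psi\otimes\phi)(u)| : \psi \in (A_1^*)_1,\ \phi \in (A_2^*)_1\,\}$, vanishes, whence $u=0$ with no further hypothesis; this explains why part 2 needs no approximation property. In case 1, $\alpha=\gamma$, and $j$ need not be injective in general; however, by Grothendieck's classical characterization of the condition of approximation, if either $A_1$ or $A_2$ satisfies it then the canonical map $A_1 \hat{\otimes}_\gamma A_2 \to A_1 \hat{\otimes}_\varepsilon A_2$ is injective, so $j(u)=0$ forces $u=0$. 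This injectivity of $j$ is the crux of the argument and the sole place where the approximation hypothesis is used.
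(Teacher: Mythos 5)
Your proof is correct, but there is nothing in the paper to compare it against: the paper does not prove this statement at all, it simply quotes it as Theorem 4 of Tomiyama \cite{tmj}, after recalling the definition of the condition of approximation. So your argument should be judged as a self-contained reconstruction, and as such it is sound and is essentially the classical line of proof. The three ingredients you rely on are all legitimate: (i) every character of $A_1 \hat{\otimes}_\alpha A_2$ (for $\alpha=\gamma$, or for $\alpha=\varepsilon$ when that completion is an algebra) has the form $\eta_1 \otimes \eta_2$, which is exactly Tomiyama's Theorem 2 as recalled in the paper; (ii) the slice maps $L_\psi$ and $R_\phi$ are bounded with norm at most $\|\psi\|$, $\|\phi\|$ for both the $\gamma$- and the $\varepsilon$-norms, and the identities $\eta_1(R_{\eta_2}(u))=(\eta_1\otimes\eta_2)(u)$, $\phi(L_\psi(u))=(\psi\otimes\phi)(u)$ hold on elementary tensors and extend by continuity, so that semi-simplicity of the factors upgrades "all product characters kill $u$" to "all functionals $\psi\otimes\phi$ kill $u$"; and (iii) Grothendieck's characterization of the approximation property, namely that if either factor has it then the canonical map $A_1 \hat{\otimes}_\gamma A_2 \to A_1 \hat{\otimes}_\varepsilon A_2$ is injective (see \cite{sprR}), which is precisely where the hypothesis of part 1 enters, while in part 2 that map is the identity and no hypothesis is needed. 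Your converse direction via the cross-norm property $\alpha(a\otimes b)=\|a\|\,\|b\|$ is also correct and, as you note, does not use the approximation property, so you in fact prove slightly more than the statement asks in part 1. The one point worth flagging is that you implicitly assume the maximal ideal spaces are nonempty and the algebras nonzero; in the degenerate cases (a radical factor with no characters) the contrapositive argument still goes through, but it deserves a sentence if this were written out in full.
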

Therefore since 
$$X= C_{\Lambda_1 \times \Lambda_2 }(M_1\times M_2)$$ is a Banach algebra we get that $X$ is semi-simple iff $A_1=C_{\Lambda_1}(M_1)$ and $A_2=C_{\Lambda_2}(M_2)$ are semi-simple.

Let $M$ be compact set and $p$ a polynomial with distinct roots. The condition for $C_\Lambda(M)$ to be semi-simple is given in \cite{oam} as follows.
\begin{thm}[Theorem $2.18$ in \cite{oam}]
	$C_\Lambda(M)$ is semi-simple if and only if $M$ contains no isolated critical values of $p.$
\end{thm}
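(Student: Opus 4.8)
The plan is to realise the radical as the kernel of the Gelfand transform and then to study this kernel fibre-by-fibre over $M$. Since $C_\Lambda(M)$ is a commutative unital Banach algebra whose characters are exactly the point evaluations $\chi_z\colon f\mapsto \sum_{j=1}^d \delta_j(z)f_j(p(z))$ indexed by $z\in K=p^{-1}(M)$ (the one-variable form of \eqref{eq3.1}), its Jacobson radical is ${\rm rad}\,C_\Lambda(M)=\bigcap_{z\in K}\ker\chi_z=\{f:(\mathcal{L}f)(z)=0 \text{ for all } z\in K\}$. Thus semi-simplicity is equivalent to the injectivity of $\mathcal{L}$ on $C_\Lambda(M)$, and the task is to show this injectivity holds precisely when $M$ has no isolated critical value.

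For fixed $w\in M$ and $z\in p^{-1}(w)$ one has $(\mathcal{L}f)(z)=\sum_{j}\delta_j(z)f_j(w)$, so the restriction of $\mathcal{L}f$ to the fibre depends on $f$ only through the vector $f(w)=(f_1(w),\dots,f_d(w))\in\mathbb{C}^d$. Recalling that the $\delta_j$ are the Lagrange basis polynomials $\delta_j(z)=p(z)/\big(p'(\lambda_j)(z-\lambda_j)\big)$ of degree $d-1$, which are linearly independent, I define the evaluation map $\Phi_w\colon\mathbb{C}^d\to\mathbb{C}^{p^{-1}(w)}$ by $(\Phi_w v)(z)=\sum_j v_j\delta_j(z)$; it sends $v$ to the polynomial $q_v=\sum_j v_j\delta_j$ restricted to the fibre. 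When $w$ is a regular value the fibre has $d$ distinct points, and a polynomial of degree $<d$ vanishing at $d$ distinct points is zero, so $\Phi_w$ is injective; when $w$ is a critical value the fibre has $r<d$ points, whence $\Phi_w\colon\mathbb{C}^d\to\mathbb{C}^r$ has nontrivial kernel. This dichotomy drives the whole argument.

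Suppose first that $M$ contains no isolated critical value and let $f\in{\rm rad}\,C_\Lambda(M)$, so $\mathcal{L}f\equiv 0$. At every regular value $w$ the injectivity of $\Phi_w$ together with $\Phi_w(f(w))=0$ forces $f(w)=0$. The critical values of $p$ are finite in number (images of the finitely many zeros of $p'$); if $w_c\in M$ is a critical value then, not being isolated, it is a limit point of $M$, and since deleting finitely many points leaves it a limit point of $M$ minus the critical values, I can choose regular values $w_n\in M$ with $w_n\to w_c$. Continuity of $f$ and $f(w_n)=0$ give $f(w_c)=0$. Hence $f\equiv 0$ and the algebra is semi-simple.

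For the converse I argue contrapositively: if $w_c\in M$ is an isolated critical value, pick $v\in\ker\Phi_{w_c}\setminus\{0\}$ (nonempty by the critical case above) and set $f(w_c)=v$, $f(w)=0$ for $w\ne w_c$; isolation of $w_c$ makes $f$ continuous, and $(\mathcal{L}f)(z)=0$ for every $z\in K$ --- on the fibre over $w_c$ because $v\in\ker\Phi_{w_c}$, and elsewhere because $f$ vanishes --- so $0\ne f\in{\rm rad}\,C_\Lambda(M)$ and $C_\Lambda(M)$ is not semi-simple. The step I expect to be most delicate is the opening identification ${\rm rad}\,C_\Lambda(M)=\ker\mathcal{L}$ with the accompanying claim that the characters are exhausted by the evaluations $\chi_z$, i.e. that the maximal ideal space is exactly $K$; once that is in hand the linear algebra and the continuity argument are routine. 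A second point needing care is the confluent case: at a critical value one only requires $q_v$ to vanish pointwise on the smaller fibre (no derivative conditions appear, since the characters are ordinary point evaluations), which is exactly why the crude count $r<d$ already produces a kernel.
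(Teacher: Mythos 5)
Your proof is correct and follows essentially the same route as the original proof of Theorem 2.18 in \cite{oam}, which this paper quotes without reproving: identify ${\rm rad}\,C_\Lambda(M)$ with $\ker\mathcal{L}$ via the classification of characters, use Lagrange interpolation on regular fibres together with continuity (critical values being finite in number, hence limits of regular values when not isolated) to force $f\equiv 0$, and conversely build a nonzero radical element supported on an isolated critical value from $\ker\Phi_{w_c}$. The character identification you flag as delicate is precisely the theorem preceding 2.18 in \cite{oam} (the one-variable analogue of \eqref{eq3.1} here), so relying on it is consistent with the original argument; if you wanted to avoid it in the converse direction, you could instead note that your $f$ satisfies $f^{[d]}=0$ in the polyproduct, since $v$ lies in the radical of the fibre algebra $\mathbb{C}[z]/(p(z)-w_c)$, and nilpotent elements of a commutative Banach algebra always lie in the radical.
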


\section{Application to commuting matrices}

The multicentric calculus as in \cite{oam} provides a natural approach to deal with non-trivial Jordan blocks and one does not need differentiability at such eigenvalues.

As an application there are considered situations in which $p(A)$ is diagonalizable or similar to normal. The aim thus is to remove the Jordan blocks by moving from $A$ to $p(A).$ If $D={\rm diag}\lbrace \alpha_{j} \rbrace$ is a diagonal matrix and $\varphi$ is a continuous function, the any reasonable functional calculus satisfies $\varphi(D)={\rm diag}\lbrace \varphi(\alpha_j)\rbrace,$ while if $A$ is diagonalizable so that with a similarity $T$ one has $A=TDT^{-1},$ then one sets
\begin{equation}
	\label{eq5.1.1}
	\varphi(A)=T \varphi(D) T^{-1}.
\end{equation}

When applying the calculus in practice to $n-$tuples of commuting operators we use $n-$tuples of commuting matrices. In finding matrices which commute to a given set of $n\times n$ matrices two tools appear to be useful,  that is, a standard form for a given matrix (Jordan canonical form) and restrictions on the form of the commuting matrices. The latter is a canonical form called the H-form introduced by K.C. O'Meara and C. Vinsonhaler in \cite{laa}.

\begin{defn} A set of matrices $A_1, A_2, \dots , A_n$ is said to commute if they commute pairwise.
\end{defn}

A property of commuting matrices is that they preserve each other's eigenspaces, so they map the same invariant subspaces. If both matrices are diagonalizable, then they can be simultaneously diagonalized. Moreover, if one of the matrices has the property that its minimal polynomial coincide with its characteristic polynomial, i.e. the characteristic polynomial has only simple roots, then the other matrix can be written as a polynomial in the first matrix.

Two Hermitian matrices (or self-adjoint matrices) commute if their eigenspaces coincide. In particular, two Hermitian matrices without multiple eigenvalues commute if they share the same set of eigenvectors.

\begin{ex} \textit{(of commuting matrices).} The unit matrix commute with all matrices. Diagonal matrices commute. Jordan blocks commute with upper triangular matrices that have the same values along the diagonal and superdiagonals. If the product of two symmetric matrices is symmetric, then they must commute.
\end{ex}

Assume we are given $2$ square matrices, $A$ and $B.$ Let $A$ be diagonalizable, i.e. 
$
A= T D T^{-1}
$
and $B$ be given in the Jordan canonical form
$
B=S J S^{-1}
$
(or then assume they are given the other way around, $A$ in the Jordan canonical form and $B$ diagonalizable).

Then $A$ and $B$ commute as pointed out in the example above. One could then ask what is the minimal polynomial in each cases which makes them to be diagonalizable both. This is the case where it makes sense to consider $p_1$ and $p_2$ to be different, since they might actually not be the same.

Then those functions $\varphi$ which have $f$ in the Banach algebra structure defined in this paper, are exactly these functions which have this type of functional calculus.

\end{document}